\theoremstyle{plain}
\newtheorem{theorem}{Theorem}
\newtheorem{lemma}{Lemma}
\newtheorem{proposition}{Proposition}
\theoremstyle{definition}
\newtheorem{definition}{Definition}
\theoremstyle{remark}
\newtheorem{remark}{Remark}
\DeclareMathOperator{\Det}{Det}
\DeclareMathOperator{\Tr}{Tr}
\DeclareMathOperator{\res}{res}
\newcommand{\pset}[1]{\mathcal{#1}}
\DeclareMathOperator{\discr}{discr}
\newcommand{\uvar}{\bm{u}}
\newcommand{\xvar}{\bm{x}}
\newcommand{\yvar}{\bm{y}}
\newcommand{\pvar}{\bm{p}}
\begin{document}
\begin{CJK}{UTF8}{gbsn}

\title{Stability and chaos of the duopoly model of Kopel: A study based on symbolic computations}


\author[a]{Xiaoliang Li}

\author[b]{Kongyan Chen}

\author[c,d]{Wei Niu}

\author[e]{Bo Huang\thanks{Corresponding author: huangbo0407@126.com}}

\affil[a]{School of Business, Guangzhou College of Technology and Business,\newline Guangzhou 510850, China}

\affil[b]{School of Digital Economics, Dongguan City University, Dongguan 523419, China}

\affil[c]{Ecole Centrale de P\'ekin, Beihang University, Beijing 100191, China}

\affil[d]{Beihang Hangzhou Innovation Institute Yuhang, Hangzhou 310051, China}

\affil[e]{LMIB -- School of Mathematical Sciences, Beihang University, Beijing 100191, China}

\date{}
\maketitle

\begin{abstract}
Since Kopel's duopoly model was proposed about three decades ago, there are almost no analytical results on the equilibria and their stability in the asymmetric case. The first objective of our study is to fill this gap. This paper analyzes the asymmetric duopoly model of Kopel analytically by using several tools based on symbolic computations. We discuss the possibility of the existence of multiple positive equilibria and establish necessary and sufficient conditions for a given number of positive equilibria to exist. The possible positions of the equilibria in Kopel's model are also explored. Furthermore, in the asymmetric model of Kopel, if the duopolists adopt the best response reactions or homogeneous adaptive expectations, we establish rigorous conditions for the local stability of equilibria for the first time. The occurrence of chaos in Kopel's model seems to be supported by observations through numerical simulations, which, however, is challenging to prove rigorously. The second objective is to prove the existence of snapback repellers in Kopel's map, which implies the existence of chaos in the sense of Li--Yorke according to Marotto's theorem.

\vspace{10pt}
\noindent\emph{Keywords: Kopel's model; local stability; snapback repeller; chaos; symbolic computation}
\end{abstract}

\section{Introduction}

The study of oligopolistic competition lies at the core of the field of industrial organization.  Since decades ago, many economists have been making efforts to model and investigate oligopolistic competition by discrete dynamical systems of form with non-monotonic reaction functions. See, e.g., \cite{cavalli_nonlinear_2015, kopel_simple_1996, li_stability_2023, Puu1991C, tramontana_local_2015}. There are two strands of establishing non-monotonic reaction functions. The first one is to introduce nonlinearity to the demand function including \cite{cavalli_nonlinear_2015, Puu1991C, tramontana_local_2015}, etc. Whereas, the second one is to employ nonlinear cost functions. For example, Kopel \cite{kopel_simple_1996} proposed a famous Cournot duopoly model by assuming a linear demand function and nonlinear cost functions, which lead to unimodal reaction functions. In brief, Kopel's model can be described as the following map.
\begin{equation}\label{eq:map-kopel}
\left\{
\begin{split}
&x(t+1)=(1-\rho_1)x(t)+ \rho_1\mu_1 y(t)(1-y(t)),\\
&y(t+1)=(1-\rho_2)y(t)+ \rho_2\mu_2 x(t)(1-x(t)),
\end{split}\right.
\end{equation}
where $0< \rho_1,\rho_2\leq 1$ and $\mu_1,\mu_2>0$. In this map, $x(t)$ and $y(t)$ denote the output quantities of the two duopolists, respectively. Moreover, $\rho_1$ and $\rho_2$ represent the adjustment coefficients of the adaptive expectations of the two firms, while $\mu_1$ and $\mu_2$ measure the intensity of the positive externality the actions of the player exert on the payoff of its rival. 

The model of Kopel is mathematically interesting because it couples two logistic maps together. After Kopel's model was proposed, a large number of contributions have been made to intensively investigate, extend, and generalize the model. For example, Agiza \cite{agiza_analysis_1999} established rigorous conditions for the stability of the equilibria and investigated the bifurcations and chaos of Kopel's model in the special case of $\rho_1=\rho_2$ and $\mu_1=\mu_2$. Bischi et al.\ \cite{bischi_multistability_2000} proved the existence of the multistability and cycle attractors in map \eqref{eq:map-kopel}. Bischi and Kopel \cite{bischi_equilibrium_2001} used the method of critical curves to analyze the global bifurcations and illustrate the qualitative changes in the topological structure of the basins of Kopel's model. Anderson et al.\ \cite{anderson_basins_2005} considered the basins of attraction in the case of three nontrivial Nash equilibria in Kopel's map and found that a circle, lines, and rectangles play a key role in determining the basins. Govaerts and Khoshsiar \cite{Govaerts2008} considered the synchronization of two dynamics parameters of Kopel's model and discovered the dynamics by computing numerically the critical normal form coefficients of all codimension-one and codimension-two bifurcation points. Detailed bifurcation analyses were also conducted in, e.g., \cite{Li2022C, yao_codimension-one_2022}, which may help in understanding the occurrence and the structure of bifurcation cascades. Furthermore, Zhang and Gao \cite{zhang_equilibrium_2019} extended the model by assuming that each firm could forecast its rival’s output through a straightforward extrapolative foresight technology. Elsadany and Awad \cite{elsadany_dynamical_2016} modified Kopel's game by assuming one player uses a naive expectation whereas the other employs an adaptive technique and applied the feedback control method to control the chaotic behavior. Torcicollo \cite{torcicollo_dynamics_2013} generalized map \eqref{eq:map-kopel} by introducing the self-diffusion and cross-diffusion terms, and acquired general properties such as boundedness and uniqueness. Rionero and Torcicollo \cite{rionero_stability_2014} took into account the territory where the outputs are in the market and the time-depending firms’ strategies by generalizing Kopel's discrete model to a non-autonomous reaction-diffusion binary system of PDEs.

Agiza et al.\ \cite{agiza_multistability_1999} proposed a similar game with three competing firms, and provided a numerical study of the basins of attraction for several coexisting stable Nash equilibria, which reveals the occurrence of global bifurcations where the basins are transformed from connected sets into non-connected ones. Eskandari et al.\ \cite{eskandari_codimension_2021} reconsidered this triopoly discrete model and investigated possible codimension-one and codimension-two bifurcations. In \cite{li_bifurcation_2023}, the first author of this paper and his coworkers analyzed the bifurcation sets and the critical normal forms of different types of bifurcations to detect the complexity of dynamics in the Kopel triopoly game. Moreover, they conducted numerical simulations to investigate representative orbits, chaotic indicators, Lyapunov exponents, and bifurcation continuation.

In the rich literature regarding the model of Kopel, it is shocking that there are almost no analytical discussions on the equilibria and their stability in the asymmetric case of $\mu_1\neq \mu_2$. The first primary goal of our study is to fill this gap. The major obstacle to the analytical study of the asymmetric case of $\mu_1\neq \mu_2$ is that the closed-form equilibria can not be obtained explicitly. We employ several tools based on symbolic computations such as the triangular decomposition \cite{wang_elimination_2001} and the resultant \cite{Mishra1993A} to get around this obstacle. We explore the possibility of the existence of multiple positive equilibria, which has received a lot of attention from economists. We establish necessary and sufficient conditions for a given number of equilibria to exist. If the two duopolists adopt the best response reactions ($\rho_1=\rho_2=1$) or homogeneous adaptive expectations ($\rho_1=\rho_2$), we acquire rigorous conditions for the existence of distinct numbers of positive equilibria for the first time. 

Chaotic dynamics in map \eqref{eq:map-kopel} could be discovered through observations through numerical simulations. However, it is challenging to prove the existence of chaos rigorously. In this regard, Wu et al.\ \cite{wu_complex_2010} gave a computer-assisted verification for the existence of the chaotic dynamics in Kopel's model by using the topological horseshoe theory. They discussed the dynamics under the 104th iteration of Kopel's map and found that there exists a horseshoe in this attractor. C{\'a}novas and Mu{\~n}oz-Guillermo \cite{Canovas2018O} detected the existence of chaos if the firms in the model of Kopel are homogeneous. In the case of homogeneous players, they reduced the equations to a one-dimensional model, where the topological entropy with fixed accuracy can be computed. It should be pointed out that the computations of the topological entropy both in \cite{wu_complex_2010} and \cite{Canovas2018O} are based on float-points algorithms, where the obtained results are not necessarily reliable in some sense. In this paper, by using tools based on symbolic computations, we rigorously prove the existence of snapback repellers in Kopel's map, which implies the existence of chaos in the sense of Li--Yorke \cite{li_period_1975} according to Marotto’s theorem \cite{marotto_snap-back_1978, marotto_redefining_2005}. We should mention that the main idea for symbolic analysis of snapback repellers was initially proposed in \cite{huang_analysis_2019} by Huang and Niu, coauthors of this paper. However, their method is insufficient and is fixed in this paper by introducing more steps of further validation for the existence of snapback repellers.

The rest of this paper is structured as follows. In Section 2, we explore the number of the equilibria in map \eqref{eq:map-kopel} and their possible positions. Section 3 analytically investigates the local stability of the equilibria in the asymmetric model of Kopel. In Section 4, we rigorously prove the existence of snapback repellers, hence chaos, in the symmetric case. Concluding remarks are provided in Section 5.

\section{Equilibrium Analysis}


To the best of our knowledge, Li et al.\ \cite{Li2022C} preliminarily explored the number of equilibria in the asymmetric case of $\mu_1\neq \mu_2$. In this section, we aim at providing a more systematic analysis of this case. By setting $x(t+1)=x(t)=x^*$ and $y(t+1)=y(t)=y^*$ in map \eqref{eq:map-kopel}, we obtain the equilibrium equations
\begin{equation}\label{eq:eq-equi}
\left\{
	\begin{split}
	& x^*=\mu_1 y^*(1-y^*),\\
	& y^*=\mu_2 x^*(1-x^*).	
	\end{split}
\right.
\end{equation}


Provided that $\mu_1,\mu_2\leq 4$, it is evident that $[0,1]\times [0,1]$ is a trapping set of map \eqref{eq:map-kopel}, which means that all the trajectories will stay in $[0,1]\times [0,1]$ if the initial belief $(x(0),y(0))$ is taken from $[0,1]\times [0,1]$. In addition, from the following Proposition \ref{prop:all-equi}, one can see that if $\mu_1\mu_2\geq 1$ (even if $\mu_1,\mu_2>4$), then the equilibria that the trajectories approach should lie in $[0,1]\times [0,1]$. In the proof of Proposition \ref{prop:all-equi}, the triangular decomposition method plays an ambitious role, which extends the Gaussian elimination method such that polynomial equations can be handled. Readers may refer to \cite{jin_new_2013, Li2010D, wang_elimination_2001} for more information regarding the triangular decomposition. 

Moreover, the notion of resultant is required in our study. Let $$F=\sum_{i=0}^ma_i\,x^i,\quad G=\sum_{j=0}^lb_j\,x^j$$ be two univariate polynomials in $x$ with coefficients $a_i,b_j$ in the field of complex numbers, and $a_m,b_l\neq 0$. The
determinant
\begin{equation*}\label{eq:sylmat}
 \begin{array}{c@{\hspace{-5pt}}l}
 \left|\begin{array}{cccccc}
a_m & a_{m-1}& \cdots   & a_0   &        &       \\
           & \ddots   & \ddots&    \ddots    &\ddots&   \\
         &          & a_m   & a_{m-1}&\cdots& a_0 \\ [5pt]
b_l & b_{l-1}& \cdots   &  b_0 &    &         \\
            & \ddots   &\ddots &   \ddots     &\ddots&       \\
         &   &    b_{l}     & b_{l-1} & \cdots &  b_0
\end{array}\right|
& \begin{array}{l}\left.\rule{0mm}{8mm}\right\}l\\
\\\left.\rule{0mm}{8mm}\right\}m
\end{array}
\end{array}
\end{equation*}
is called the \emph{Sylvester resultant} (or simply
\emph{resultant}) of $F$ and $G$, and denoted by $\res(F,G,x)$. The resultant of $F$ and its derivative ${\rm d}F/{\rm d}x$, i.e., $\res(F,{\rm d}F/{\rm d}x,x)$, is called the \emph{discriminant} of $F$ and denoted by $\discr(F)$. The following lemma is one of the well-known properties of resultants, which can also be found in \cite{Mishra1993A}.

%
%
%

\begin{lemma}\label{lem:res-com}
Let $A$ and $B$ be two univariate polynomials in $x$.
There exist two polynomials $F$ and $G$ in $x$ such that 
$$FA+GB=\res(A,B,x).$$
Furthermore, $A$ and $B$ have common zeros in the field of complex numbers if and only if $\res(A, B, x)=0$.
\end{lemma}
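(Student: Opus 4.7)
The plan is to use the linear-algebraic structure of the Sylvester matrix to establish the identity $FA + GB = \res(A,B,x)$ directly, and then read off both directions of the common-zero characterization from it.

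First I would perform a column operation on the Sylvester matrix $S$ that preserves its determinant. Denoting the columns of $S$ by $C_1, \ldots, C_{m+l}$, I would replace $C_{m+l}$ by $C_{m+l} + \sum_{j=1}^{m+l-1} x^{m+l-j}\,C_j$. A direct computation using the shifted block structure of $S$ shows that the new last column has entries $x^{l-i} A(x)$ in row $i$ for $i=1,\ldots,l$, and $x^{m-i'} B(x)$ in row $l+i'$ for $i'=1,\ldots,m$. Expanding this determinant along the modified last column and collecting the terms multiplying $A$ and $B$ separately yields
\begin{equation*}
\res(A,B,x) = F(x)\,A(x) + G(x)\,B(x),
\end{equation*}
where $F$ and $G$ are polynomials in $x$ whose coefficients are appropriately signed minors of $S$. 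This proves the first claim.

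For the equivalence, the forward direction is immediate: if $\alpha\in\mathbb{C}$ satisfies $A(\alpha)=B(\alpha)=0$, substituting $x=\alpha$ into the identity gives $\res(A,B,x)=0$. For the reverse direction, suppose $\res(A,B,x)=0$, so that $S$ is singular. Then the associated linear map $(P,Q)\mapsto PA+QB$ on $\mathbb{C}[x]_{<l}\oplus \mathbb{C}[x]_{<m}$ has a nontrivial kernel, yielding polynomials $P,Q$ with $\deg P<l$, $\deg Q<m$, not both zero, such that $PA+QB=0$. If $A$ and $B$ were coprime in $\mathbb{C}[x]$, then from $PA=-QB$ one would deduce $B\mid P$, forcing $P=0$ by degree considerations and hence $Q=0$ as well, a contradiction. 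Therefore $A$ and $B$ share a non-constant common factor in $\mathbb{C}[x]$, which must have a root in $\mathbb{C}$ by the fundamental theorem of algebra.

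The main technical obstacle is the bookkeeping in the column-operation step: one must verify carefully that the staircase pattern of the Sylvester matrix, together with the chosen powers $x^{m+l-j}$, reassembles into exactly the shifted copies $x^i A$ and $x^j B$ in the correct rows, and that the alternating signs in the cofactor expansion align so that the $A$-contributions and $B$-contributions really do form polynomials $F(x)$ and $G(x)$. Once this identity is in hand, both halves of the biconditional follow from elementary arguments---a pointwise evaluation in one direction, and the UFD property of $\mathbb{C}[x]$ combined with the fundamental theorem of algebra in the other.
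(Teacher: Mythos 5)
Your proof is correct. Note that the paper itself offers no proof of this lemma --- it is stated as a well-known property of resultants with a citation to Mishra's \emph{Algorithmic Algebra} --- and your argument is essentially the standard one found there: the column operation $C_{m+l}\mapsto\sum_{j}x^{m+l-j}C_j$ does produce the entries $x^{l-i}A$ and $x^{m-i'}B$ in the last column (so cofactor expansion yields $FA+GB=\res(A,B,x)$), and your converse via the nontrivial kernel of $(P,Q)\mapsto PA+QB$ together with coprimality and the fundamental theorem of algebra is sound, using the standing assumption $a_m,b_l\neq 0$ from the paper's definition of the Sylvester matrix.
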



\begin{proposition}\label{prop:all-equi}
	In map \eqref{eq:map-kopel}, all the equilibria remain inside $[0,1]\times [0,1]$ if $\mu_1\mu_2\geq 1$.
\end{proposition}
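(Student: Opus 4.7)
The plan is to argue by contradiction: assume some equilibrium $(x^*,y^*)$ of map~\eqref{eq:map-kopel} lies outside $[0,1]\times[0,1]$, and show that this forces $\mu_1\mu_2<1$, contradicting the hypothesis.

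The first step is a sign-chasing argument on the system~\eqref{eq:eq-equi} which reduces all offending configurations to a single one. If $y^*>1$ then $y^*(1-y^*)<0$, so the first equation forces $x^*<0$; but then $x^*(1-x^*)<0$ and the second equation forces $y^*<0$, contradicting $y^*>1$. The case $x^*>1$ is symmetric. Similarly, $x^*<0$ combined with the first equation requires $y^*(1-y^*)<0$, which, once $y^*>1$ has been excluded, means $y^*<0$; by symmetry $y^*<0$ forces $x^*<0$. So the only configuration I have not yet killed is $x^*<0$ and $y^*<0$ simultaneously, and up to this point the assumption $\mu_1\mu_2\ge 1$ has not been used at all.

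To close this last case, I would introduce $a=-x^*>0$ and $b=-y^*>0$, turning~\eqref{eq:eq-equi} into $a=\mu_1 b(1+b)$ and $b=\mu_2 a(1+a)$. Substituting the second into the first and cancelling the nonzero factor $a$ yields
\[
1 \;=\; \mu_1\mu_2\,(1+a)\bigl(1+\mu_2\,a(1+a)\bigr).
\]
Since $a>0$ and $\mu_2>0$, both bracketed factors exceed $1$, so the right-hand side is strictly greater than $\mu_1\mu_2$, forcing $\mu_1\mu_2<1$ and contradicting the hypothesis.

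I expect the only subtle point to be recognising that the doubly-negative case is the \emph{unique} one in which the inequality $\mu_1\mu_2\ge 1$ actually does work; every other exclusion is purely sign-theoretic and does not require any bound on $\mu_1\mu_2$. An alternative route, more in the spirit of the paper's symbolic framework, would be to apply a triangular decomposition to~\eqref{eq:eq-equi} to peel off the trivial component $\{x^*=y^*=0\}$ from a non-trivial component carried by a cubic eliminant $P(y^*)$, and then verify directly that $P$ is strictly positive on $(-\infty,0)$ and strictly negative on $(1,\infty)$ whenever $\mu_1\mu_2\ge 1$, with the analogous cubic in $x^*$ obtained by swapping $\mu_1\leftrightarrow\mu_2$. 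Either approach reaches the conclusion, and I would present whichever reads more cleanly.
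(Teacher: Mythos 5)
Your proof is correct, and it takes a genuinely different route from the paper's. The paper first applies a triangular decomposition to \eqref{eq:eq-equi}, reducing the problem to locating the real zeros of a cubic $G_1(x^*)$, and then uses resultants: $\res(G_1,1-x^*,x^*)=1$ shows no zero can cross $x^*=1$ as the parameters vary (so all zeros stay below $1$ after checking a sample point), while $\res(G_1,x^*,x^*)=\mu_1\mu_2-1$ shows the zeros can only change sign across the curve $\mu_1\mu_2=1$, with the boundary case $\mu_1\mu_2=1$ solved explicitly. Your argument bypasses all of this machinery: the sign-chasing on \eqref{eq:eq-equi} eliminates every offending configuration except $x^*<0,\ y^*<0$ without any hypothesis on $\mu_1\mu_2$, and the substitution $a=-x^*,\ b=-y^*$ turns that last case into the identity $1=\mu_1\mu_2(1+a)\bigl(1+\mu_2 a(1+a)\bigr)$ with both bracketed factors exceeding $1$, forcing $\mu_1\mu_2<1$. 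What your approach buys is an elementary, fully self-contained proof that also sharpens the statement (no equilibrium can ever have a coordinate greater than $1$, regardless of $\mu_1\mu_2$), and it avoids the implicit continuity-plus-sample-point step in the paper's resultant argument. What the paper's approach buys is the objects $G_1$, $G_2$ and the resultant technique themselves, which are reused verbatim in Theorem \ref{thm:equi-num} and Section 3, so the proposition doubles as a warm-up for the rest of the paper. Your closing remark correctly identifies the paper's route as the alternative; either presentation is sound.
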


\begin{proof}
By using the triangular decomposition method, we can decompose the solutions of system \eqref{eq:eq-equi} into zeros of two triangular sets, i.e., $[y^*,x^*]$ and $[G_2,G_1]$,
where
\begin{align*}
	&G_1=\mu_1 \mu_2^2 x^{*3}- 2\, \mu_1 \mu_2^{2} x^{*2}+\left(\mu_1 \mu_2^{2}+\mu_1 \mu_2\right) x^*-\mu_1 \mu_2 +1,\\
	&G_2=y^*+\mu_2 x^{*2} -\mu_2 x^*.
\end{align*}
The zero of $[y^*,x^*]$ corresponds to the equilibrium $E_0=(0,0)$, which obviously lies in $[0,1]\times [0,1]$. Therefore, we focus on the other branch $[G_2, G_1]$. 

Because of the symmetry of $x^*,y^*$ and $\mu_1,\mu_2$ in \eqref{eq:eq-equi}, we just need to prove that any zero of $G_1$ satisfies $x^*\in [0,1]$ if $\mu_1\mu_2\geq 1$. One can compute that $\res(G_1,1-x^*,x^*)=1$. According to Lemma \ref{lem:res-com}, we know any zero $x^*$ of $G_1$ can not touch the line $x^*=1$ as $\mu_1$ and $\mu_2$ vary. Therefore, any zero $x^*$ of $G_1$ is smaller than 1. Furthermore, we have $\res(G_1,x^*,x^*)=\mu_1\mu_2-1$. This means that the sign of a zero of $G_1$ may change as the parameter point $(\mu_1,\mu_2)$ goes across the curve $\mu_1\mu_2=1$. It is easy to check that any zero of $G_1$ is positive if $\mu_1\mu_2>1$ and vice versa. When $\mu_1\mu_2=1$, we can plug $\mu_2=1/\mu_1$ into $G_1$ and solve three complex zeros $0$, $1+\sqrt{-\mu_1}$, and $1-\sqrt{-\mu_1}$. The only real zero $x^*=0$ is in $[0,1]$, obviously. In conclusion, all zeros of $G_1$ remain inside $[0,1]$ if $\mu_1\mu_2\geq 1$, which completes the proof.
\end{proof}

From an economic point of view, positive equilibria that satisfy $x^*,y^*>0$ are more important and mainly focused on in what follows.

\begin{theorem}\label{thm:equi-num}
Denote 
$$R_1=\mu_1^2 \mu_2^2-4\,\mu_1^2 \mu_2-4\,\mu_1 \mu_2^2 +18\,\mu_1\mu_2 -27.$$ 
Three distinct positive equilibria exist if and only if $R_1>0$, and one unique positive equilibrium exists if and only if $R_1<0$ and $\mu_1\mu_2>1$. In the case of $R_1=0$, two distinct positive equilibria exist if and only if $(\mu_1,\mu_2)\neq (3,3)$, and one unique positive equilibrium $\left(\frac{2}{3},\frac{2}{3}\right)$ exists if $(\mu_1,\mu_2)= (3,3)$.
\end{theorem}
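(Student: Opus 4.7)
The plan is to reduce the count of positive equilibria to a classical discriminant analysis of the univariate cubic $G_1$ from the proof of Proposition \ref{prop:all-equi}, then combine this with a positivity argument for the roots. As a first step, I would reuse the triangular decomposition $[G_2,G_1]$ there: positive equilibria of \eqref{eq:eq-equi} are in bijection with real zeros $x^*\in(0,1)$ of $G_1$, because $\mu_2>0$ forces $y^*=\mu_2 x^*(1-x^*)>0$ automatically for $x^*\in(0,1)$. The resultant identities $\res(G_1,1-x^*,x^*)=1$ and $\res(G_1,x^*,x^*)=\mu_1\mu_2-1$ noted in that proof already show that every real zero of $G_1$ lies strictly below $1$, and that all real zeros are strictly positive exactly when $\mu_1\mu_2>1$.

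Next, I would compute $\discr(G_1)$ directly via the Sylvester matrix of $G_1$ and $\partial G_1/\partial x^*$. A symbolic calculation yields $\discr(G_1)=\kappa(\mu_1,\mu_2)\,R_1$, where $\kappa$ is a monomial in $\mu_1,\mu_2$ that is manifestly positive on $\mu_1,\mu_2>0$. The classical cubic trichotomy then reads: $R_1>0$ iff $G_1$ has three distinct real roots; $R_1<0$ iff $G_1$ has exactly one real root together with a conjugate complex pair; and $R_1=0$ iff $G_1$ has a real multiple root.

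To convert these into statements about positive equilibria, I would establish the one-sided implication $R_1\ge 0\Rightarrow\mu_1\mu_2>1$. Setting $p=\mu_1\mu_2$ and $s=\mu_1+\mu_2$ rewrites $R_1=p^2+(18-4s)p-27$, which is linear and decreasing in $s$ at fixed $p>0$; the AM-GM bound $s\ge 2\sqrt{p}$ then gives $R_1\le p^2+18p-8p^{3/2}-27=:f(p)$, and a short monotonicity check with $f(1)=-16$ shows $R_1<0$ whenever $\mu_1\mu_2\le 1$. Combining this with the trichotomy and the positivity criterion from step one immediately yields the first two assertions: $R_1>0$ forces $\mu_1\mu_2>1$ and gives three positive equilibria, while $R_1<0$ together with $\mu_1\mu_2>1$ gives exactly one positive equilibrium.

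Finally, for the boundary case $R_1=0$, the implication above guarantees $\mu_1\mu_2>1$, so every real root of $G_1$ is positive; the only remaining question is whether $G_1$ has a triple root or one double plus one simple. A triple root $r$ must satisfy $G_1''(r)=2\mu_1\mu_2^2(3r-2)=0$, so $r=2/3$; then $G_1'(2/3)=\mu_1\mu_2(1-\mu_2/3)=0$ forces $\mu_2=3$; and substituting $\mu_2=3$ into $G_1(2/3)$ forces $\mu_1=3$. Thus the triple root occurs only at $(\mu_1,\mu_2)=(3,3)$ with equilibrium $(2/3,2/3)$; at every other point on the curve $R_1=0$, $G_1$ has one double and one simple positive root, giving two distinct positive equilibria. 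The main technical obstacle I expect is the symbolic computation and clean factorization of $\discr(G_1)$ to isolate $R_1$ with a provably positive cofactor; without that clean factorization, the sign analysis above cannot be carried out in polynomial form.
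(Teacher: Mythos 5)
Your proposal is correct and follows the same skeleton as the paper's proof --- reduce to the cubic $G_1$ from the triangular decomposition, observe that positive equilibria correspond bijectively to real roots $x^*\in(0,1)$ of $G_1$, and let $R_1$ govern the root structure --- but you establish the trichotomy by a genuinely different mechanism. The paper treats $R_1=0$ as a border variety, picks sample points $(4,4)$ and $(2,2)$ in the two regions, and argues the root count is constant off the border; this implicitly requires each region to be connected (or a sample in every component) and never explicitly checks that the three roots in the region $R_1>0$ are positive. You instead invoke the classical pointwise discriminant criterion for cubics, which needs no connectivity argument, and you supply the missing implication $R_1\ge 0\Rightarrow\mu_1\mu_2>1$ via the substitution $p=\mu_1\mu_2$, $s=\mu_1+\mu_2$ and AM--GM (the bound $R_1\le p^2-8p^{3/2}+18p-27$ with derivative $2(\sqrt{p}-3)^2\ge 0$ and value $-16$ at $p=1$ checks out); this closes a gap the paper leaves tacit. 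Your boundary analysis at $R_1=0$ via $G_1''=2\mu_1\mu_2^2(3x^*-2)$, forcing $r=2/3$, then $\mu_2=3$, then $\mu_1=3$, is a direct-substitution version of the paper's computation of $\res(G_1,G_1'',x^*)$ and reaches the same conclusion. One caveat on conventions: $\res(G_1,G_1',x^*)=-\mu_1^3\mu_2^6R_1$, so under the paper's definition of $\discr$ (the bare resultant) the cofactor of $R_1$ is \emph{negative}; it is the normalized discriminant $(-1)^{3}\,a_3^{-1}\res(G_1,G_1',x^*)=\mu_1^2\mu_2^4R_1$ that carries the positive monomial cofactor and to which the classical sign trichotomy applies. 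State which convention you use, and the argument is complete.
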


\begin{proof}
One can see that $G_2$ is linear with respect to $y^*$, which means that the number of equilibria corresponding to $[G_2, G_1]$ equals that of distinct real zeros of $G_1$. Consequently, we analyze the real zeros of $G_1$ in the sequel. 

For a univariate polynomial $F(x)$, the multiplicity of a zero $\bar x$ is greater than 1 if $F(\bar x)=0$ and $F'(\bar x)=0$ are simultaneously satisfied, where $F'$ is the derivative of $F$. We have $\res(G_1,G'_1,x^*)=-\mu_1^{3} \mu_2^{6} R_1$.
Therefore, the nature (e.g., the multiplicity and number) of real zeros of $G_1$ should not change if the parameter point $(\mu_1,\mu_2)$ is not across the curve $R_1=0$ as the values of $\mu_1,\mu_2$ vary. As shown in Figure \ref{fig:num-equilibria}, the curve $R_1=0$ (the red curve) divides the parameter set of concern $\{(\mu_1,\mu_2)\,|\,\mu_1,\mu_2>0\}$ into two regions. In the dark-gray region defined by $R_1>0$, the nature of the zeros can be determined by checking at some sample point, say $(4,4)$ for example. At $(4,4)$, we have
$$G_1=64\, x^{*3}-128\, x^{*2}+80\, x^*-15,$$ 
which can be solved by $\frac{3}{4},\frac{5}{8}-\frac{\sqrt{5}}{8},\frac{5}{8}+\frac{\sqrt{5}}{8}$. Hence, three distinct real zeros of $G_1$ exist if $R_1>0$. This implies that three distinct positive equilibria exist in map \eqref{eq:map-kopel} if $R_1>0$.

In the other region defined by $R_1<0$, we similarly select a sample point, e.g., $(2,2)$. At this point, we have 
$$G_1=8\, x^{*3}-16\, x^{*2}+12\, x^*-3,$$ 
which can be solved by $\frac{1}{2},\frac{3}{4}-\frac{\mathrm{I} \sqrt{3}}{4},\frac{3}{4}+\frac{\mathrm{I} \sqrt{3}}{4}$. Thus, only one real solution of $G_1$ exists if $R_1<0$. However, in the proof of Proposition \ref{prop:all-equi}, it has been proved that the zero of $G_1$ is positive if $\mu_1\mu_2>1$. Therefore, one unique positive equilibrium exists if $R_1<0$ and $\mu_1\mu_2>1$.

Furthermore, on the curve $R_1=0$, we have $G_1=G'_1=0$ according to Lemma \ref{lem:res-com}. Then, the multiplicity of some zeros of $G_1$ should be greater than 1, which means that some zeros of $G_1$ become identical as the parameter point $(\mu_1,\mu_2)$ varies from the region $R_1>0$ to its edge $R_1=0$. The degree of $G_1$ with respect to $x^*$ is 3, thus the multiplicity of a zero of $G_1$ can be taken as 3 at maximum. In this case, $G_1=0$, $G_1'=0$, and $G''_1=0$ should be fulfilled simultaneously. One can compute 
$$\res(G_1,G_1'',x^*)=-8\, \mu_1^{3} \mu_2^{6} \left(2\, \mu_1 \mu_2^{2}-9\, \mu_1 \mu_2+27\right).$$ 
By solving $2\, \mu_1 \mu_2^{2}-9\, \mu_1 \mu_2+27=0$ and $R_1=0$, we obtain $(\mu_1,\mu_2)=(3,3)$, where $G_1$ has one unique zero  $x^*=2/3$ with multiplicity 3. Thus, one unique positive equilibrium $\left(\frac{2}{3},\frac{2}{3}\right)$ exists if $(\mu_1,\mu_2)= (3,3)$. But, if $(\mu_1,\mu_2)\neq (3,3)$ and $R_1=0$, then $G_1=0$, $G_1'=0$, and $G''_1\neq 0$. In this case, there are three real zeros of $G_1$, among which two are equal. This means that there are two distinct positive equilibria. The proof is complete.
\end{proof}

We summarize the above results in Figure \ref{fig:num-equilibria}. The regions where three and one unique positive equilibria exist are marked in dark-gray and light-gray, respectively.

\begin{figure}[htbp]
    \centering
    \includegraphics[width=0.45\textwidth]{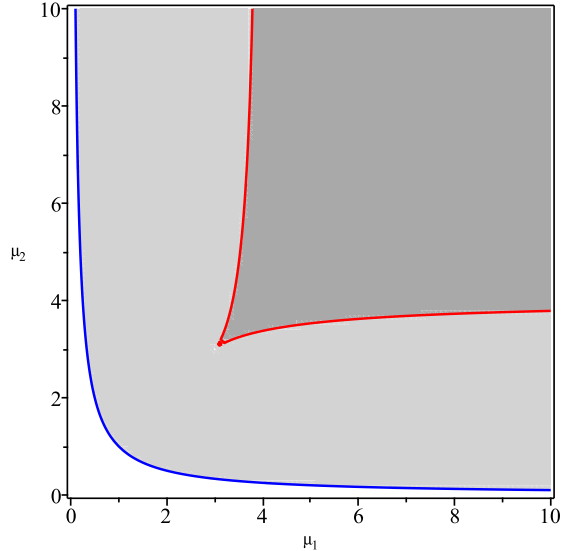}
    \caption{Partitions of the parameter set $\{(\mu_1,\mu_2)\,|\,\mu_1,\mu_2>0\}$ of map \eqref{eq:map-kopel} for distinct numbers of equilibria. The blue and red curves are defined by $\mu_1\mu_2=1$ and $R_1=0$, respectively. In the dark-gray and light-gray regions, there are three and one positive equilibria, respectively.}
    \label{fig:num-equilibria}
\end{figure}

\section{Local Stability}

The Jacobian matrix of map \eqref{eq:map-kopel} at an equilibrium $(x^*,y^*)$ is 

\begin{equation*}
J(x^*,y^*)=\left[
\begin{matrix}
1-\rho_1 &  \mu_1 \rho_1 (1-2\,y^*)
\\
 \mu_2 \rho_2 (1-2\,x^*) & 1-\rho_2
\end{matrix}
\right].
\end{equation*}
Then, the characteristic polynomial of $J$ becomes
$$CP(\lambda)=\lambda^2-\Tr(J)\lambda+\Det(J),$$
where
\begin{align*}
	&\Tr(J)=2-\rho_1-\rho_2,\\
	&\Det(J)=(1-\rho_1)(1-\rho_2)-\mu_1\mu_2\rho_1\rho_2(1-2\,x^*)(1-2\,y^*),
\end{align*}
are the trace and the determinant of $J$, respectively.
According to the Jury criterion \cite{Jury1976I}, the conditions for the local stability include:
\begin{enumerate}
	\item $CD_1^J\equiv CP(1)= 1-\Tr(J)+\Det(J)>0$,
	\item $CD_2^J\equiv CP(-1)= 1+\Tr(J)+\Det(J)>0$,
	\item $CD_3^J\equiv 1-\Det(J)>0$.
\end{enumerate}

For the zero equilibrium $E_0=(0,0)$, we have
\begin{align*}
	&CD_1^J(E_0)=\rho_1 \rho_2 (1-\mu_1\mu_2),\\
	&CD_2^J(E_0)= \rho_1 \rho_2( 1-\mu_1\mu_2)-2\, \rho_1-2\,\rho_2+4,\\
	&CD_3^J(E_0)= \rho_1 \rho_2 (\mu_1\mu_2-1)+\rho_1+\rho_2.
\end{align*}
It can be easily derived that $E_0$ is locally stable if 
$$1-\frac{\rho_1+ \rho_2}{\rho_1 \rho_2}< \mu_1\mu_2 <1.$$ 
In what follows, we discuss the positive equilibria. 

It is known that the discrete dynamic system may undergo a local bifurcation when the equilibrium loses its stability at $CD_1^J=0$, $CD_2^J=0$, or $CD_3^J=0$. Denote $\pset{G}=[G_2,G_1]$. Firstly, we consider the possibility of $CD_1^J=0$ by computing
$$\res(CD_1^J,\pset{G})\equiv \res(\res(CD_1^J,G_2,y^*),G_1,x^*).$$
It is obtained that 
$$\res(CD_1^J,\pset{G})=-\rho_1^{3} \rho_2^{3} \mu_1^{3} \mu_2^{6} \left(\mu_1\mu_2-1\right) R_1.$$
According to Lemma \ref{lem:res-com}, $CD_1^J=0$ and $\pset{G}=0$ imply $\res(CD_1^J,\pset{G})=0$. Hence, at a positive equilibrium of map \eqref{eq:map-kopel}, a necessary condition for $CD_1^J=0$ is $\mu_1\mu_2=1$ or $R_1=0$. Similarly, regarding the possibility of $CD_2^J=0$ and $CD_3^J=0$, we have
\begin{align*}
	&\res(CD_2^J,\pset{G})=-\mu_1^3 \mu_2^6 R_2,\\
	&\res(CD_3^J,\pset{G})=\mu_1^3 \mu_2^6 R_3,
\end{align*}
where
\begin{align*}
	\begin{autobreak}
R_2=
\rho_1^{3} \rho_2^{3} \mu_1^{3} \mu_2^{3} 
- 4\,\rho_1^{3} \rho_2^{3} \mu_1^{3} \mu_2^{2} 
- 4\,\rho_1^{3} \rho_2^{3} \mu_1^{2} \mu_2^{3} 
+ 17\,\rho_1^{3} \rho_2^{3} \mu_1^{2} \mu_2^{2} 
+ 4\,\rho_1^{3} \rho_2^{3} \mu_1^{2} \mu_2 
+ 4\,\rho_1^{3} \rho_2^{3} \mu_1 \mu_2^{2} 
- 2\,\rho_1^{3} \rho_2^{2} \mu_1^{2} \mu_2^{2} 
- 2\,\rho_1^{2} \rho_2^{3} \mu_1^{2} \mu_2^{2} 
- 45\,\rho_1^{3} \rho_2^{3} \mu_1 \mu_2 
+ 8\,\rho_1^{3} \rho_2^{2} \mu_1^{2} \mu_2 
+ 8\,\rho_1^{3} \rho_2^{2} \mu_1 \mu_2^{2} 
+ 8\,\rho_1^{2} \rho_2^{3} \mu_1^{2} \mu_2 
+ 8\,\rho_1^{2} \rho_2^{3} \mu_1 \mu_2^{2} 
+ 4\,\rho_1^{2} \rho_2^{2} \mu_1^{2} \mu_2^{2} 
- 36\,\rho_1^{3} \rho_2^{2} \mu_1 \mu_2 
- 36\,\rho_1^{2} \rho_2^{3} \mu_1 \mu_2 
- 16\,\rho_1^{2} \rho_2^{2} \mu_1^{2} \mu_2 
- 16\,\rho_1^{2} \rho_2^{2} \mu_1 \mu_2^{2} 
+ 27\,\rho_1^{3} \rho_2^{3} 
- 4\,\rho_1^{3} \rho_2 \mu_1 \mu_2 
+ 64\,\rho_1^{2} \rho_2^{2} \mu_1 \mu_2 
- 4\,\rho_1 \rho_2^{3} \mu_1 \mu_2 
+ 54\,\rho_1^{3} \rho_2^{2} 
+ 54\,\rho_1^{2} \rho_2^{3} 
+ 16\,\rho_1^{2} \rho_2 \mu_1 \mu_2 
+ 16\,\rho_1 \rho_2^{2} \mu_1 \mu_2 
+ 36\,\rho_1^{3} \rho_2 
- 36\,\rho_1^{2} \rho_2^{2} 
+ 36\,\rho_1 \rho_2^{3} 
- 16\,\rho_1 \rho_2 \mu_1 \mu_2 
+ 8\,\rho_1^{3} 
- 120\,\rho_1^{2} \rho_2 
- 120\,\rho_1 \rho_2^{2} 
+ 8\,\rho_2^{3} 
- 48\,\rho_1^{2} 
+ 48\,\rho_1 \rho_2 
- 48\,\rho_2^{2} 
+ 96\,\rho_1 
+ 96\,\rho_2 
- 64,
\end{autobreak}\\

\begin{autobreak}
R_3=
\rho_1^{3} \rho_2^{3} \mu_1^{3} \mu_2^{3} 
- 4\,\rho_1^{3} \rho_2^{3} \mu_1^{3} \mu_2^{2} 
- 4\,\rho_1^{3} \rho_2^{3} \mu_1^{2} \mu_2^{3} 
+ 17\,\rho_1^{3} \rho_2^{3} \mu_1^{2} \mu_2^{2} 
+ 4\,\rho_1^{3} \rho_2^{3} \mu_1^{2} \mu_2 
+ 4\,\rho_1^{3} \rho_2^{3} \mu_1 \mu_2^{2} 
- \rho_1^{3} \rho_2^{2} \mu_1^{2} \mu_2^{2} 
- \rho_1^{2} \rho_2^{3} \mu_1^{2} \mu_2^{2} 
- 45\,\rho_1^{3} \rho_2^{3} \mu_1 \mu_2 
+ 4\,\rho_1^{3} \rho_2^{2} \mu_1^{2} \mu_2 
+ 4\,\rho_1^{3} \rho_2^{2} \mu_1 \mu_2^{2} 
+ 4\,\rho_1^{2} \rho_2^{3} \mu_1^{2} \mu_2 
+ 4\,\rho_1^{2} \rho_2^{3} \mu_1 \mu_2^{2} 
- 18\,\rho_1^{3} \rho_2^{2} \mu_1 \mu_2 
- 18\,\rho_1^{2} \rho_2^{3} \mu_1 \mu_2 
+ 27\,\rho_1^{3} \rho_2^{3} 
- \rho_1^{3} \rho_2 \mu_1 \mu_2 
- 2\,\rho_1^{2} \rho_2^{2} \mu_1 \mu_2 
- \rho_1 \rho_2^{3} \mu_1 \mu_2 
+ 27\,\rho_1^{3} \rho_2^{2} 
+ 27\,\rho_1^{2} \rho_2^{3} 
+ 9\,\rho_1^{3} \rho_2 
+ 18\,\rho_1^{2} \rho_2^{2} 
+ 9\,\rho_1 \rho_2^{3} 
+ \rho_1^{3} 
+ 3\,\rho_1^{2} \rho_2 
+ 3\,\rho_1 \rho_2^{2} 
+ \rho_2^{3}.
\end{autobreak}
\end{align*}

Therefore, local bifurcations of the equilibria may take place when $\mu_1\mu_2=1$ or $R_i=0$, $i=1,2,3$. However, we should mention that the signs of $CD_i^J$ and those of $\res(CD_i^J,\pset{G})$ may not be the same for a given $i$. Even if the signs of $\res(CD_i^J,\pset{G})$ are fixed, the signs of $CD_i^J$ may change. For example, if keeping $\rho_1=\rho_2=1$, we have $\res(R_1,\pset{G})<0$, $\res(R_2,\pset{G})<0$ and $\res(R_3,\pset{G})>0$ when $\mu_1=\mu_2=4$ or $\mu_1=\mu_2=\frac{13}{4}$. When $\mu_1=\mu_2=4$, there are three equilibria, i.e., $\left(\frac{3}{4},\frac{3}{4}\right)$, $\left(\frac{5}{8}-\frac{\sqrt{5}}{8},\frac{5}{8}+\frac{\sqrt{5}}{8}\right)$, and $\left(\frac{5}{8}+\frac{\sqrt{5}}{8},\frac{5}{8}-\frac{\sqrt{5}}{8}\right)$, where the signs of $CD_3^J$ are $+$, $-$, and $-$, respectively. Whereas, when $\mu_1=\mu_2=\frac{13}{4}$, there are three equilibria $\left(\frac{9}{13},\frac{9}{13}\right)$, $\left(\frac{17}{26}-\frac{\sqrt{17}}{26},\frac{17}{26}+\frac{\sqrt{17}}{26}\right)$, and $\left(\frac{17}{26}+\frac{\sqrt{17}}{26},\frac{17}{26}-\frac{\sqrt{17}}{26}\right)$, where the signs of $CD_3^J$ are $+$, $+$, and $+$, respectively.

\begin{remark}\label{rem:idea}
However, it can be derived that if we vary the parameter point continuously without going across the algebraic variety $\res(CD_i^J,\pset{G})=0$ in the parameter space, then $CD_i^J$ will not annihilate (become zero) and its sign will keep fixed. This means that in each region divided by $\res(CD_i^J,\pset{G})=0$, $i=1,2,3$, the signs of $CD_i$, $i=1,2,3$, will not change and we can determine the number of real zeros satisfying $CD_i>0$, $i=1,2,3$, by selecting at least one sample point. 	
\end{remark}

Firstly, we consider the case of $\rho_1=\rho_2=1$. This case is much simpler but is of tremendous economic interest itself, in which the best response reaction functions are actually adopted by the two duopolists. We denote 
\begin{align*}
	&S_2\equiv R_2|_{\rho_1=\rho_2=1}=\left(\mu_1\mu_2-1\right) R_1,\\
	&S_3\equiv R_3|_{\rho_1=\rho_2=1}=\mu_1^{3} \mu_2^{3}-4\, \mu_1^{3} \mu_2^{2}-4\, \mu_1^{2} \mu_2^{3}+15\, \mu_1^{2} \mu_2^{2}+12\, \mu_1^{2} \mu_2+12\, \mu_1 \mu_2^{2}-85\, \mu_1 \mu_2+125.
\end{align*}

From Theorem \ref{thm:equi-num}, we recall that the number of positive equilibria will remain the same if we vary the parameter point continuously without being across the curves $\mu_1\mu_2=1$ and $R_1=0$ in the parameter space. Furthermore, according to Remark \ref{rem:idea}, it is known that the number of stable ($CD_i^J>0$) positive equilibria will not change if the parameter point does not go across $\mu_1\mu_2=1$, $R_1=0$,  $S_2=0$, $S_3=0$ (or simply $\mu_1\mu_2=1$, $R_1=0$, $S_3=0$ as $S_2=(\mu_1\mu_2-1)R_1$). Hence, one can select at least one sample point from each region divided by $\mu_1\mu_2=1$, $R_1=0$, $S_3=0$, and then identify the number of stable positive equilibria at the sample point. The process of selecting sample points can be automated by using, e.g., the method of partial cylindrical algebraic decomposition or called the PCAD method \cite{Collins1991P}. Herein, the PCAD method permits us to select 39 sample points such that there is at least (not exactly) one sample point from each of these regions. We list the selected sample points in Table \ref{tab:sample}, where ``num" stands for the number of stable positive equilibria. Moreover, Table \ref{tab:sample} reports the signs of $\mu_1\mu_2-1$, $R_1$, and $S_3$ at all these sample points. 

\begin{table}[htbp]
	\centering 
	\caption{Selected Sample Points in $\{(\mu_1,\mu_2)\,|\,\mu_1,\mu_2>0\}$}
	\label{tab:sample} 
	\begin{tabular}{|l|c|c|c|c||l|c|c|c|c|}  
\hline  
$(\mu_1,\mu_2)$ & num & $\mu_1\mu_2-1$ & $R_1$ & $S_3$ & $(\mu_1,\mu_2)$ & num & $\mu_1\mu_2-1$ & $R_1$ & $S_3$ \\ \hline
$(1, 1/2)$ & 0 & $-$ & $-$ & $+$ & $(1, 2)$ & 1 & $+$ & $-$ & $+$ \\ \hline
$(1, 5)$ & 0 & $+$ & $-$ & $-$ & $(25/8, 1/4)$ & 0 & $-$ & $-$ & $+$ \\ \hline
$(25/8, 1)$ & 1 & $+$ & $-$ & $+$ & $(25/8, 25/8)$ & 2 & $+$ & $+$ & $+$ \\ \hline
$(25/8, 13/4)$ & 1 & $+$ & $-$ & $+$ & $(25/8, 4)$ & 0 & $+$ & $-$ & $-$ \\ \hline
$(13/4, 1/4)$ & 0 & $-$ & $-$ & $+$ & $(13/4, 1)$ & 1 & $+$ & $-$ & $+$ \\ \hline
$(13/4, 13/4)$ & 2 & $+$ & $+$ & $+$ & $(13/4, 55/16)$ & 1 & $+$ & $+$ & $-$ \\ \hline
$(13/4, 4)$ & 0 & $+$ & $-$ & $-$ & $(27/8, 1/4)$ & 0 & $-$ & $-$ & $+$ \\ \hline
$(27/8, 1)$ & 1 & $+$ & $-$ & $+$ & $(27/8, 2)$ & 0 & $+$ & $-$ & $-$ \\ \hline
$(27/8, 3)$ & 1 & $+$ & $-$ & $+$ & $(27/8, 13/4)$ & 2 & $+$ & $+$ & $+$ \\ \hline
$(27/8, 7/2)$ & 1 & $+$ & $+$ & $-$ & $(27/8, 5)$ & 0 & $+$ & $-$ & $-$ \\ \hline
$(219/64, 1/4)$ & 0 & $-$ & $-$ & $+$ & $(219/64, 1)$ & 1 & $+$ & $-$ & $+$ \\ \hline
$(219/64, 2)$ & 0 & $+$ & $-$ & $-$ & $(219/64, 103/32)$ & 1 & $+$ & $+$ & $-$ \\ \hline
$(219/64, 13/4)$ & 2 & $+$ & $+$ & $+$ & $(219/64, 7/2)$ & 1 & $+$ & $+$ & $-$ \\ \hline
$(219/64, 5)$ & 0 & $+$ & $-$ & $-$ & $(7/2, 1/4)$ & 0 & $-$ & $-$ & $+$ \\ \hline
$(7/2, 1)$ & 1 & $+$ & $-$ & $+$ & $(7/2, 2)$ & 0 & $+$ & $-$ & $-$ \\ \hline
$(7/2, 13/4)$ & 1 & $+$ & $+$ & $-$ & $(7/2, 7/2)$ & 0 & $+$ & $+$ & $+$ \\ \hline
$(7/2, 4)$ & 1 & $+$ & $+$ & $-$ & $(7/2, 5)$ & 0 & $+$ & $-$ & $-$ \\ \hline
$(5, 1/8)$ & 0 & $-$ & $-$ & $+$ & $(5, 1/2)$ & 1 & $+$ & $-$ & $+$ \\ \hline
$(5, 1)$ & 0 & $+$ & $-$ & $-$ & $(5, 57/16)$ & 1 & $+$ & $+$ & $-$ \\ \hline
$(5, 4)$ & 0 & $+$ & $+$ & $+$ & &  &  &  &  \\ \hline
	\end{tabular}
\end{table}

From Table \ref{tab:sample}, we can see that one unique stable positive equilibrium exists if $\mu_1\mu_2>1$, $R_1<0$, $S_3>0$ or $\mu_1\mu_2>1$, $R_1>0$, $S_3<0$. However, the conditions for the existence of two stable positive equilibria are not evident. For example, two stable positive equilibria exist at $(\mu_1,\mu_2)=(13/4,13/4)$, where $\mu_1\mu_2>1$, $R_1>0$, and $S_3>0$ are satisfied; whereas no stable positive equilibria exist at $(\mu_1,\mu_2)=(5,4)$, where $\mu_1\mu_2>1$, $R_1>0$, and $S_3>0$ are also fulfilled. 
 
We acquire the regions for distinct numbers of stable positive equilibria, which are depicted in Figure \ref{fig:num-stable-eq}. The region of two stable positive equilibria is marked in yellow, while that of one stable positive equilibrium is marked in light-gray. Indeed, it may be quite complicated to algebraically describe a region bounded by algebraic varieties. For example, the yellow region can not be characterized by inequalities regarding $\mu_1\mu_2-1$, $R_1$, and $S_3$ (their signs at the yellow region are the same as those at the white region where $(4,4)$ lies). However, this yellow region can be described algebraically by introducing additional polynomials, which can be found in the so-called generalized discriminant list. Readers may refer to \cite{Li2014C, Yang2001A} for more information.

We formally summarize the obtained results in Theorem \ref{thm:stable-equiv}, where $A_1$ and $A_2$ are additional polynomials picked out from the generalized discriminant list. It should be mentioned the computations of searching for these polynomials are quite expensive although this process works perfectly in theory. 

\begin{figure}[htbp]
  \centering
  \subfloat[$(\mu_1,\mu_2)\in (0,10)\times(1,10)$]{\includegraphics[width=0.45\textwidth]{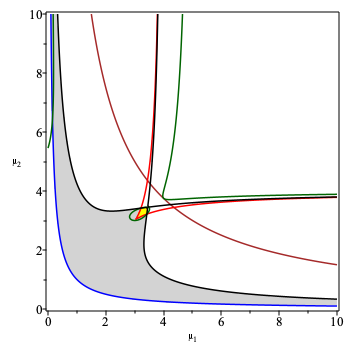}} 
  \subfloat[$(\mu_1,\mu_2)\in (2.5,5)\times(2.5,5)$]{\includegraphics[width=0.45\textwidth]{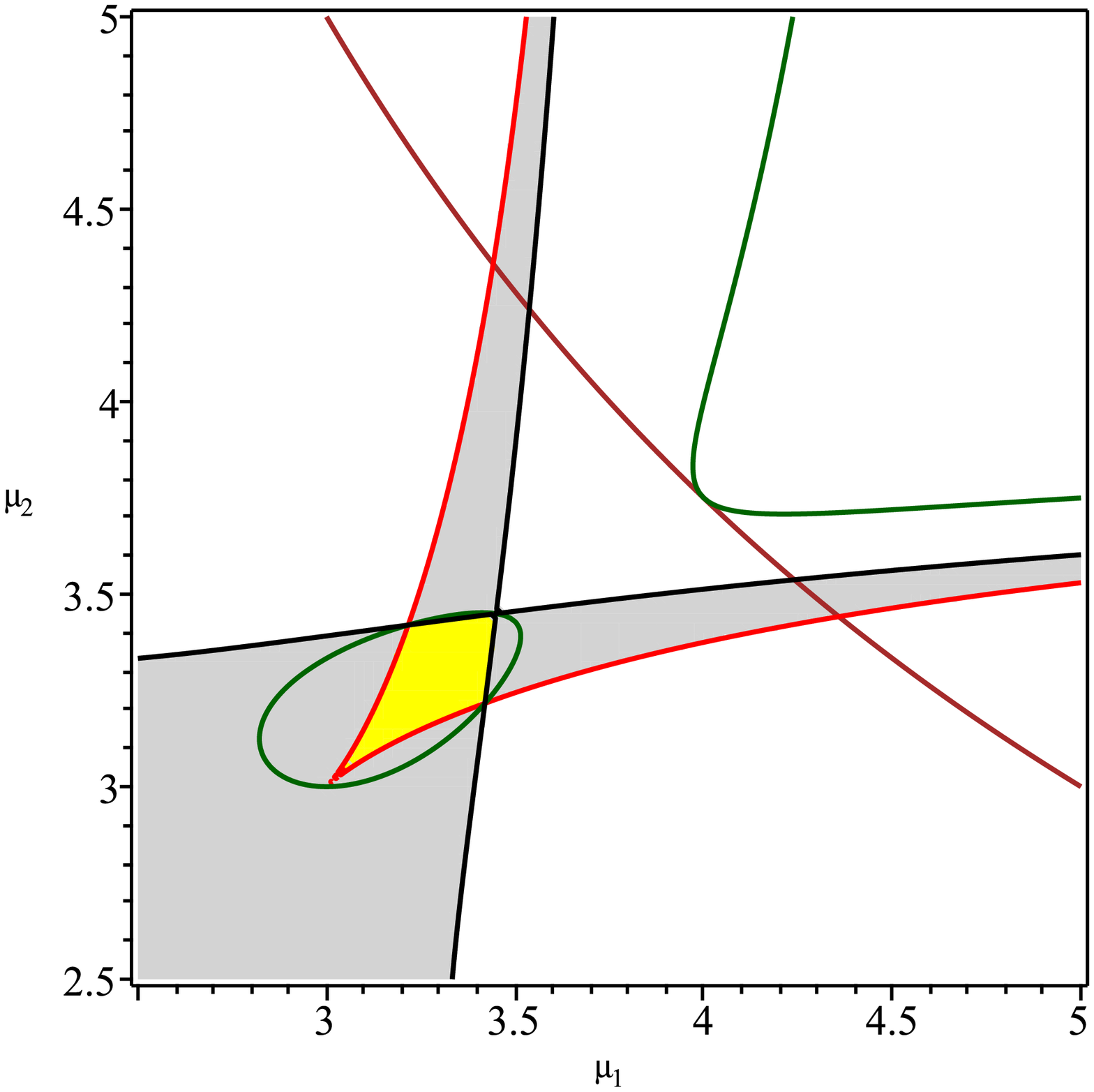}} \\

  \caption{Partitions of the parameter set $\{(\mu_1,\mu_2)\,|\,\mu_1,\mu_2>0\}$ in the case of $\rho_1=\rho_2=1$ for distinct numbers of stable positive equilibria. The blue, red, black, brown, and green curves are defined by $\mu_1\mu_2=1$, $R_1=0$, $S_3=0$, $A_1=0$, and $A_2=0$, respectively. In the yellow and light-gray regions, there are two and one stable positive equilibria, respectively.}
\label{fig:num-stable-eq}
\end{figure}

%

\begin{theorem}\label{thm:stable-equiv}
	In the case of $\rho_1=\rho_2=1$, two stable positive equilibria exist if $R_1>0$, $S_3>0$, $A_1<0$, and $A_2>0$, where
\begin{align*}
&A_1=\mu_1\mu_2-15,\\
&A_2=\mu_1^2\mu_2^2-4\,\mu_1^2 \mu_2-5\, \mu_1\mu_2^2+21\,\mu_1\mu_2+11\,\mu_2-60.
\end{align*}
Furthermore, one unique stable positive equilibrium exists if $\mu_1\mu_2>1$, $R_1<0$, $S_3>0$ or $\mu_1\mu_2>1$, $R_1>0$, $S_3<0$. 
\end{theorem}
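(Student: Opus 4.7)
The plan is to convert the claim into a finite case analysis over a semialgebraic partition of $\{(\mu_1,\mu_2)\,|\,\mu_1,\mu_2>0\}$. Setting $\rho_1=\rho_2=1$ in the three bifurcation resultants $\res(CD_i^J,\pset{G})$ computed just above Remark~\ref{rem:idea} leaves, up to the non-vanishing factors $\mu_1^3\mu_2^6$, the three curves $\mu_1\mu_2=1$, $R_1=0$, and $S_3=0$ (since $S_2=(\mu_1\mu_2-1)R_1$). Combined with Theorem~\ref{thm:equi-num}, Remark~\ref{rem:idea} shows that on each open connected component of the complement of $\{\mu_1\mu_2=1\}\cup\{R_1=0\}\cup\{S_3=0\}$ the number of positive equilibria, and at each of them the signs of $CD_1^J,CD_2^J,CD_3^J$, are all constant; hence the number of stable positive equilibria is constant on each such component. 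It therefore suffices to find a rational sample point in each region claimed in the statement, isolate the real roots of $G_1$ exactly, and verify the Jury inequalities to count the stable equilibria.

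The non-trivial subtlety, visible from $(\mu_1,\mu_2)=(13/4,13/4)$ versus $(5,4)$ in Table~\ref{tab:sample}, is that the sign pattern $(+,+,+)$ for $(\mu_1\mu_2-1,R_1,S_3)$ is realised on two distinct components with different stability counts. Hence the region yielding two stable equilibria cannot be cut out by those three polynomials alone, and additional separating polynomials are needed. I would obtain them from the generalized discriminant list of the system $\{G_1,G_2,CD_1^J,CD_2^J,CD_3^J\}$ in the sense of \cite{Li2014C,Yang2001A}: compute successive principal subresultants with respect to $x^*$ and $y^*$, discard content already controlled by $\mu_1\mu_2-1,R_1,S_3$, and identify the two essential factors $A_1$ and $A_2$ that separate the $(+,+,+)$ cell containing $(13/4,13/4)$ (count $2$) from the one containing $(5,4)$ (count $0$), and analogously split off the adjacent ambiguous cells that touch the one-stable regions.

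For the enumeration and verification step, I would apply the PCAD algorithm of \cite{Collins1991P} to the set $\{\mu_1,\mu_2,\mu_1\mu_2-1,R_1,S_3,A_1,A_2\}$ to extract at least one rational test point from each sign-pattern cell. At every such rational point $G_1$ is a cubic with rational coefficients, whose real roots can be isolated exactly; plugging each positive root into $G_2$ for $y^*$ and then into the rational expressions $CD_1^J,CD_2^J,CD_3^J$, one checks positivity by exact sign arithmetic. The theorem follows by verifying that every sample point inside $\{R_1>0,S_3>0,A_1<0,A_2>0\}$ produces two stable positive equilibria, and that every sample point inside $\{\mu_1\mu_2>1,R_1<0,S_3>0\}\cup\{\mu_1\mu_2>1,R_1>0,S_3<0\}$ produces exactly one.

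The main obstacle I anticipate is pinning down $A_1$ and $A_2$. Generalized discrimination systems grow rapidly in degree and coefficient size, and isolating exactly two factors that \emph{suffice} to resolve every ambiguous cell (rather than a much longer list containing spurious factors detecting algebraic coincidences irrelevant to stability) is where the real symbolic effort lies; this matches the author's remark that the search for such polynomials is computationally expensive. Once $A_1,A_2$ are in hand, PCAD enumeration, rational root isolation on $G_1$, and exact evaluation of the Jury conditions are mechanical.
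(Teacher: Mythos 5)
Your proposal follows essentially the same route as the paper: partition the parameter set by $\mu_1\mu_2=1$, $R_1=0$, $S_3=0$ (using Remark~\ref{rem:idea} and Theorem~\ref{thm:equi-num} to guarantee constancy of the stable-equilibrium count on each cell), select rational sample points via PCAD and verify the Jury conditions by exact arithmetic, and resolve the $(+,+,+)$ ambiguity between the cells of $(13/4,13/4)$ and $(5,4)$ by extracting the additional separating polynomials $A_1,A_2$ from the generalized discriminant list. You also correctly identify the same computational bottleneck the authors acknowledge, namely that finding $A_1$ and $A_2$ is the expensive step.
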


In the more general case of $\rho_1=\rho_2\in(0,1]$, homogeneous adaptive expectations are employed by the two duopolists. Similar calculations can be conducted, where 4947 sample points are selected. Details are not reported herein due to space limitations. Readers can understand that the algebraic description of the region where two stable positive equilibria exist can not be computed in a reasonably short time according to our implementations of the methods. However, we can geometrically describe the region of two stable positive equilibria: it is bounded by $R_1=0$ (the red surface), $H_3=0$ (the blue surface), $\rho_1=0$, and $\rho_1=1$ (See Figure \ref{fig:3d-2stable}). For the region where one stable positive equilibrium exists, our computations can yield the results, which are reported in Theorem \ref{thm:ab-one}.

\begin{figure}[htbp]
    \centering
    \includegraphics[width=10cm]{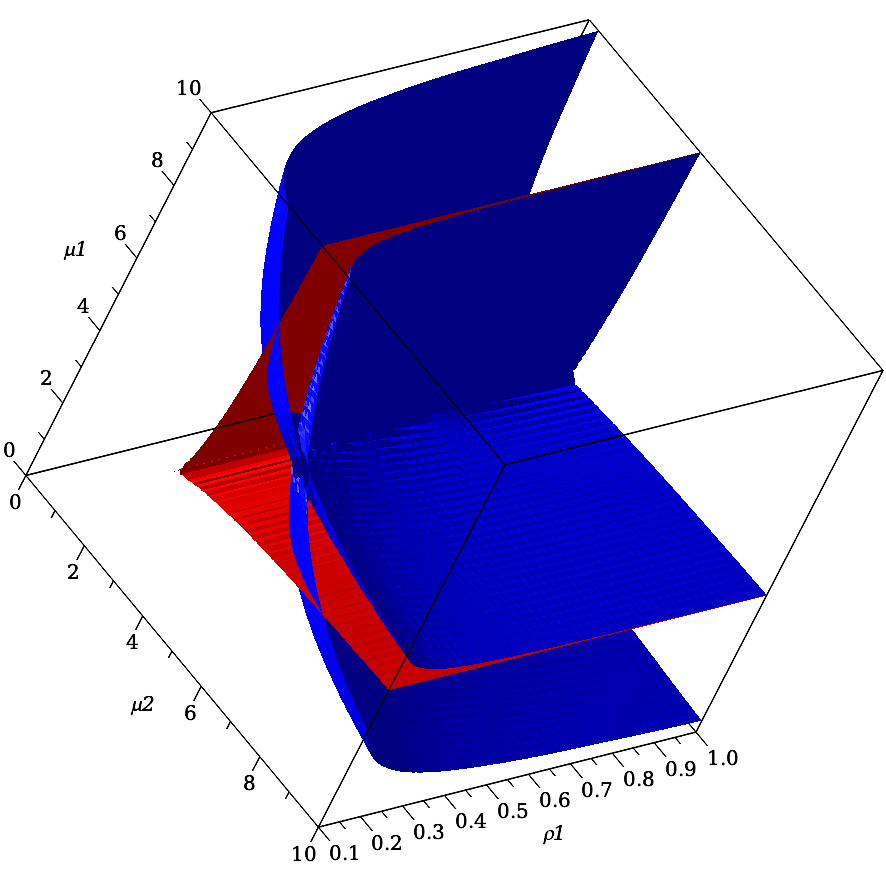}
    \caption{The parametric set $\{(\mu_1,\mu_2,\rho_1)\,|\,\mu_1,\mu_2>0,0<\rho_1\leq 1\}$ of map \eqref{eq:map-kopel} in the case of $\rho_1=\rho_2$. The surfaces of $R_1=0$ and $H_3=0$ are colored in red and blue, respectively. The region where two stable positive equilibria exist is bounded by the surfaces $R_1=0$, $H_3=0$, $\rho_1=0$, and $\rho_1=1$.}
    \label{fig:3d-2stable}
\end{figure}

\begin{theorem}\label{thm:ab-one}
In the case of $\rho_1=\rho_2\in(0,1]$, one unique stable positive equilibrium exists if $\mu_1\mu_2>1$, $R_1<0$, $H_3>0$ or $\mu_1\mu_2>1$, $R_1>0$, $H_3<0$, where $H_3\equiv R_3|_{\rho_2=\rho_1}$, i.e.,
\begin{align*}
\begin{autobreak}
H_3=
\rho_1^{3} \mu_1^{3} \mu_2^{3}
-4\,\rho_1^{3} \mu_1^{3} \mu_2^{2}
-4\,\rho_1^{3} \mu_1^{2} \mu_2^{3}
+17\,\rho_1^{3} \mu_1^{2} \mu_2^{2}
+4\,\rho_1^{3} \mu_1^{2} \mu_2
+4\,\rho_1^{3} \mu_1 \mu_2^{2}
-2\,\rho_1^{2} \mu_1^{2} \mu_2^{2}
-45\,\rho_1^{3} \mu_1 \mu_2
+8\,\rho_1^{2} \mu_1^{2} \mu_2
+8\,\rho_1^{2} \mu_1 \mu_2^{2}
-36\,\rho_1^{2} \mu_1 \mu_2
+27\,\rho_1^{3}
-4\,\rho_1 \mu_1 \mu_2
+54\,\rho_1^{2}
+36\,\rho_1
+8.
\end{autobreak}
\end{align*}
	
\end{theorem}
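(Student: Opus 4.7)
The plan is to imitate the sample-point strategy that yielded Theorem \ref{thm:stable-equiv}, now carried out in the three-dimensional parameter slice $\{(\mu_1,\mu_2,\rho_1)\,|\,\mu_1,\mu_2>0,\ 0<\rho_1\leq 1\}$. By Remark \ref{rem:idea}, within any connected component of this slice that avoids the algebraic varieties $\res(CD_i^J,\pset{G})|_{\rho_2=\rho_1}=0$ for $i=1,2,3$, the signs of the three Jury quantities at each positive equilibrium are locked; combined with Theorem \ref{thm:equi-num} this means that the count of stable positive equilibria is constant on each cell of the partition induced by $\mu_1\mu_2=1$, $R_1=0$, $H_2\equiv R_2|_{\rho_2=\rho_1}=0$, and $H_3=0$.

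First I would specialize $R_2$ and $R_3$ to $\rho_1=\rho_2$ to obtain $H_2$ and $H_3$ explicitly, noting that the $R_1$ factor is independent of $\rho_1,\rho_2$ and therefore carries over unchanged. Next I would invoke the partial cylindrical algebraic decomposition of Collins on the defining polynomials of these four varieties (together with $\rho_1$ and $1-\rho_1$ so as to respect the domain boundaries) to extract a finite list of sample points meeting every cell. At each sample point I would solve the cubic $G_1$, retain only the positive real roots, reconstruct the companion $y^*$ through $G_2$, and evaluate the three Jury conditions, thereby counting stable positive equilibria on that cell. The paper reports that this procedure produces 4947 sample points in the present case.

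The two conjunctions of the theorem would then be verified by checking that every sample point lying in either $(\mu_1\mu_2>1,\ R_1<0,\ H_3>0)$ or $(\mu_1\mu_2>1,\ R_1>0,\ H_3<0)$ returns exactly one stable positive equilibrium. Conceptually: in the first region Theorem \ref{thm:equi-num} already guarantees a unique positive equilibrium, so all that must be read off from the sample-point data is the automatic positivity of $CD_1^J$ and $CD_2^J$ there; in the second region three positive equilibria coexist, and the sign of $H_3$ forces $CD_3^J$ to distribute its signs across the three equilibria so that exactly one passes all three Jury tests.

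The main obstacle is computational rather than conceptual. The PCAD step in three variables on polynomials of degree up to nine is expensive, and one must be careful at the boundary $\rho_1\in\{0,1\}$ so that no cell is missed or spuriously created. A secondary subtlety, already flagged by the authors at $\rho_1=\rho_2=1$, $\mu_1=\mu_2=4$, is that $\res(CD_i^J,\pset{G})$ detects only simultaneous vanishing; a sign change of $CD_i^J$ at a single equilibrium need not manifest as a sign change of the full resultant. This is precisely why the pointwise tally, rather than an appeal to the signs of $R_1$, $H_2$, and $H_3$ alone, is the load-bearing step. Unlike Theorem \ref{thm:stable-equiv}, I would not attempt an algebraic description of the two-stable region, since (as the authors note) the generalized discriminant list in 3D exceeds a reasonable time budget with their current implementation; only the one-stable description is within reach.
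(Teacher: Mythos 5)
Your proposal is correct and follows essentially the same route as the paper: the authors likewise specialize the resultants to $\rho_1=\rho_2$, partition the three-dimensional parameter slice by $\mu_1\mu_2=1$, $R_1=0$, and the specialized $R_2,R_3$ varieties, and run PCAD to obtain the 4947 sample points at which the stable-equilibrium count is tallied, reporting only the one-stable-equilibrium conditions because the two-stable region resists a tractable algebraic description. Your observation that the pointwise tally, not the resultant signs alone, is the load-bearing step accurately reflects the caveat the authors themselves raise about $\res(CD_i^J,\pset{G})$ detecting only simultaneous vanishing.
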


However, if $\rho_1\neq \rho_2$, the computations of searching for the algebraic description become particularly complicated. Furthermore, the geometric description can not be plotted either since more than 3 parameters are involved. We leave the exploration of this case for our future study.

\section{Snapback Repellers and Chaos}

According to observations through numerical simulations (see, e.g., \cite{Li2022C}), it seems to be supported that there exist chaotic dynamics in Kopel's map, which, however, is challenging to prove. In this regard, some studies (see, e.g., \cite{Canovas2018O, wu_complex_2010}) tried to derive the existence of chaos in Kopel's model by computing the topological entropy using float-point algorithms. However, the results based on float-point computations may not be reliable in some sense. In this section, we employ tools based on symbolic computations to prove the existence of snapback repellers in Kopel's map, which implies the occurrence of chaos in the sense of Li--Yorke according to Marotto’s theorem. In comparison, the results produced by symbolic computations are exact and error-free, which can be used to discover and prove mathematical theorems analytically. 

The approach of deriving the existence of chaos via Marotto’s theorem has been employed in a lot of literature such as \cite{ming_analysis_2021, ren_bifurcations_2017}, where, however, some details of proving snapback repellers are omitted. For example, in Section 5 of \cite{ren_bifurcations_2017}, the repelling neighborhood of the fixed point $z_0$ is obtained via numerical simulations and the relation $f^3(z_1)=z_0$ is verified via numerical calculations. These two critical steps are certainly not reliable and their computation details are not given by Ren et al.\ \cite{ren_bifurcations_2017}. Whereas, this section provides rigorous approach based on pure symbolic computations instead.

For the sake of simplicity, we only discuss the symmetric model of Kopel, which can be described by
\begin{equation}\label{eq:map-sym}
\begin{split}
\left\{\begin{aligned}
&x({t+1})=(1-\rho)x(t)+\rho\mu y(t)(1-y(t)),\\
&y(t+1)=(1-\rho)y(t)+\rho\mu x(t)(1-x(t)),
\end{aligned}\right.
\end{split}
\end{equation}
where $0< \rho\leq 1$ and $\mu>0$. As derived by Agiza \cite{agiza_analysis_1999}, there exist four equilibria in map \eqref{eq:map-sym}, of which the closed-form expressions are
\begin{align*}
	&E_0=(0,0),\\
	&E_1=\left(1-\frac{1}{\mu},1-\frac{1}{\mu}\right),\\
	&E_2=\left(\frac{\mu+1+\sqrt{(\mu+1)(\mu+3)}}{2\mu},\frac{\mu+1-\sqrt{(\mu+1)(\mu+3)}}{2\mu}\right),\\
	&E_3=\left(\frac{\mu+1-\sqrt{(\mu+1)(\mu+3)}}{2\mu},\frac{\mu+1+\sqrt{(\mu+1)(\mu+3)}}{2\mu}\right).
\end{align*}

\subsection{Preliminaries}
 We first describe the notion of snapback repeller and Marotto's theorem. Consider the following form of $n$-dimensional discrete dynamical systems
\begin{equation}\label{eq:di5-1}\nonumber
\begin{split}
\xvar(t+1)=F(\uvar,\xvar(t)),
\end{split}
\end{equation}
where $F:\mathbb{R}^n\rightarrow \mathbb{R}^n$ is a continuously differential map with parameters $\uvar \in \mathbb{R}^d$, where $\mathbb{R}$ is the field of real numbers. Define $B_{r}(\xvar)$ as the closed ball of radius $r>0$ centered at $\xvar$ under certain norm $\parallel \cdot\parallel$ in $\mathbb{R}^n$. We say that a point ${\bar\xvar}$ is a \emph{repeller} of ${F}$ with respect to the norm $\parallel\cdot\parallel$ if there exists a constant $s>1$ and a closed ball $B_{r}({\bar{\xvar}})$ such that
$$\parallel F(\xvar)-F(\yvar)\parallel> s\cdot\parallel \xvar-\yvar\parallel,$$ 
for any $\xvar, \yvar\in B_{r}({\bar{\xvar}})$ with $\xvar\neq \yvar$. In addition, we call $B_{r}(\bar\xvar)$ a \emph{repelling neighborhood} of $\bar\xvar$.

\begin{definition}\label{def:snapback}
We say that a fixed point $\xvar^*$ is an \emph{m-snapback repeller} of ${F}$, where $m$ is an integer greater than 1, if 
\begin{enumerate}
	\item ${\xvar^*}$ is a repeller of ${F}$,
	\item $B_{r}({\xvar^*})$ is a repelling neighborhood of ${\xvar^*}$, and at least one point $\xvar_{0}\in B_{r}({\xvar^*})$ can be found such that
	\item $\xvar_{0}\neq {\xvar^*}$, $\xvar_{m}={\xvar^*}$,
and $|{F}'(\xvar_k)|\neq0$ for $1\leq k\leq m$, where $\xvar_{k}={F}^{k}(\xvar_{0})$.
\end{enumerate}
\end{definition}

Under this definition, the following lemma (Marotto's theorem) holds, which was initially proved by Marotto \cite{marotto_snap-back_1978, marotto_redefining_2005}.

%
%
%
%

\begin{lemma}[Marotto's theorem]\label{thm:li-chaos}

If $F$ possesses a snapback repeller, then $F$ is chaotic in the sense of Li--Yorke \cite{li_period_1975}, namely the following two statements are true.
\begin{enumerate}
	\item For any integer $k$, there is a point $\pvar_k\in I$ with period $k$, i.e., $F^k(\pvar_k)=\pvar_k$, and $F^i(\pvar_k)\neq \pvar_k$ for $1\leq i <k$.
	
	\item There is an uncountable set $S$ (containing no periodic points), which satisfies the following conditions:
	\begin{enumerate}
		\item for any $\xvar,\yvar\in S$ with $\xvar\neq \yvar$,
		\begin{equation}\label{eq:away2p}
			\limsup_{n\rightarrow\infty}\parallel F^n(\xvar)-F^n(\yvar)\parallel>0,
		\end{equation}
				and
		\begin{equation}\label{eq:near2p}
			\liminf_{n\rightarrow\infty}\parallel F^n(\xvar)-F^n(\yvar)\parallel=0;
		\end{equation}
		$$$$		
		\item for every point $\xvar\in S$ and every periodic point $\pvar\in I$,
		\begin{equation}\label{eq:awaycircle}
			\limsup_{n\rightarrow\infty}\parallel F^n(\xvar)-F^n(\pvar)\parallel>0.
		\end{equation}

	\end{enumerate}
\end{enumerate}
\end{lemma}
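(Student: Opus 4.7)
The plan is to use the snapback repeller to construct, via a horseshoe mechanism, a compact invariant set on which a suitable iterate of $F$ is topologically semi-conjugate to the one-sided full shift on two symbols. Once such a semi-conjugacy is in hand, the two conclusions of the lemma---periodic orbits of every period and an uncountable scrambled set---follow by pulling back the corresponding well-known properties of the shift.

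First I would analyze the expanding behavior on the repelling neighborhood $B_r(\xvar^*)$. From the hypothesis $\|F(\xvar)-F(\yvar)\|>s\,\|\xvar-\yvar\|$ with $s>1$, the map $F$ is injective on $B_r(\xvar^*)$ and admits a continuous inverse with Lipschitz constant $1/s$ on $F(B_r(\xvar^*))\cap B_r(\xvar^*)$. Iterating this inverse produces a nested sequence of closed neighborhoods of $\xvar^*$ whose diameters shrink geometrically; in particular, for every sufficiently large $k$ there is a closed set $V_k\subset B_r(\xvar^*)$ with $F^k(V_k)=B_r(\xvar^*)$, and for $k$ large $V_k$ can be separated from the snapback initial point $\xvar_0$.

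Next I would exploit the snapback orbit $\xvar_0,\xvar_1,\ldots,\xvar_m=\xvar^*$. Since the Jacobian is nonsingular along this orbit, the inverse function theorem, applied at each $\xvar_k$ and then composed, supplies an open neighborhood $U_0$ of $\xvar_0$ on which $F^m$ is a diffeomorphism onto a neighborhood of $\xvar^*$; after shrinking, one can arrange $U_0\subset B_r(\xvar^*)$, $U_0\cap V_k=\emptyset$, and $F^m(U_0)\supset B_r(\xvar^*)$. Then for $N=m+k$ the two disjoint closed sheets $V_k$ and $U_0^{(k)}:=(F^k|_{B_r(\xvar^*)})^{-1}(U_0)$ are both mapped by $F^N$ onto a set that contains $V_k\cup U_0^{(k)}$. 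Taking nested preimages of these two sheets in the usual Smale--Conley fashion yields a compact invariant Cantor-type set $\Lambda$ together with a continuous semi-conjugacy of $F^N|_\Lambda$ onto the shift $\sigma$ on $\{0,1\}^{\mathbb N}$, from which the Li--Yorke conclusions transfer to $F$ itself.

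The main obstacle will be making the horseshoe construction rigorous in dimension $n\geq 2$. One-dimensional arguments rely on the intermediate value theorem and nested intervals, but for $n\geq 2$ one must control not only diameters but also the geometric shape and orientation of the iterated preimages, and check that $V_k$ and $U_0^{(k)}$ cross $B_r(\xvar^*)$ under $F^N$ in a manner that genuinely produces two independent symbolic branches. This proper-crossing step is precisely where Marotto's original 1978 argument contained a gap, which was repaired in the 2005 redefinition by strengthening the hypothesis to uniform expansion on an entire ball so that the contracting inverse branches are globally available on the relevant region; under the definition of snapback repeller used in this paper those estimates can be assembled, but they must be written out with care.
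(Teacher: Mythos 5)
The paper does not prove this lemma at all: it is imported verbatim from Marotto \cite{marotto_snap-back_1978, marotto_redefining_2005}, so there is no in-paper argument to compare against. Judged on its own terms, your horseshoe outline is the standard modern route to Marotto-type results (a covering structure built from the expanding ball and the snapback orbit, then symbolic dynamics), and the overall architecture is sound. But as written it has three genuine gaps beyond the ``care'' you already flag. First, the claim that $F^k(V_k)=B_r(\xvar^*)$, and more generally that the expanding sheets are mapped \emph{onto} sets containing both sheets, does not follow from the metric inequality $\|F(\xvar)-F(\yvar)\|>s\|\xvar-\yvar\|$ alone; expansion only pushes the boundary sphere outward, and to conclude that the image \emph{covers} the larger ball you need invariance of domain or a topological degree argument (injectivity plus the boundary estimate gives that $F(B_\rho)$ contains the component of $\xvar^*$ in the complement of $F(\partial B_\rho)$). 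This onto-covering step is exactly what makes the nested-preimage construction nonempty, and it must be stated.

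Second, a semi-conjugacy of $F^N|_\Lambda$ onto the full shift does not by itself deliver conclusion (1): a periodic itinerary only pulls back to a compact $F^{Nj}$-invariant fiber, and a continuous self-map of a compactum need not have a fixed point. You must upgrade this to an actual fixed point, which here is best done by observing that the composed inverse branches are contractions once $k$ is large enough for the factor $s^{-k}$ to dominate the Lipschitz constant of $(F^m|_{U_0})^{-1}$, and then invoking the Banach fixed point theorem along each periodic itinerary. Even then you obtain periodic points of $F^N$ for a single fixed $N$, hence $F$-periods dividing multiples of $N$; recovering a point of period $k$ for \emph{every} $k$ (as the lemma asserts --- note Marotto himself only obtains all sufficiently large periods) requires covering loops of variable block length, not one fixed return time. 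Third, the Li--Yorke pair conditions \eqref{eq:away2p}--\eqref{eq:awaycircle} do not transfer through a mere semi-conjugacy: $\limsup>0$ needs the two sheets to be uniformly separated (fine, they are disjoint compacta), but $\liminf=0$ needs the converse estimate that points whose itineraries agree on a long initial block are close in $\Lambda$, i.e., that the coding is expansive; this again rests on the uniform contraction of the inverse branches and should be made explicit rather than inherited from the shift.
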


\begin{remark}
Eq.\ \eqref{eq:near2p} means that every trajectory in $S$ can wander arbitrarily close to every other. 
However, Eq.\ \eqref{eq:away2p} means that no matter how close two distinct trajectories in $S$ may come to each other, they must eventually wander away.
Furthermore, by \eqref{eq:awaycircle} it is obtained that every trajectory in $S$ goes away from any periodic orbit of $F$. 
\end{remark}

Marotto's theorem is significant in extending the analytic theory of chaos from one-dimensional maps to multi-dimensional maps. It is also effective in applications, for example, in finding the chaotic regimes (parameter regions) for dynamical systems. From a practical perspective, there exist two directions to determine whether a fixed point is a snapback repeller for a multi-dimensional map. The first one is to find a repelling neighborhood $B_r(\xvar^*)$ of a fixed point $\xvar^*$ and a preimage point $\xvar_0$ of $\xvar^*$ lying in $B_r(\xvar^*)$, i.e., with 
$$F^{m}(\xvar_0)=\xvar^*,~\xvar_{0}\in B_r(\xvar^*),~\xvar_{0}\neq \xvar^*,~\text{for some}~m>1.$$ 
The second direction is to construct the preimages $\left\{\bar\xvar^{-k}\right\}_{k=1}^{\infty}$ of a repeller $\bar\xvar$ such that $\lim\limits_{k\rightarrow\infty}F(\bar\xvar^{-k})=\bar\xvar$,
where
$F(\bar\xvar^{-k})=\bar\xvar^{-k+1}~\text{with}~k\geq2,~\text{and}~F(\bar\xvar^{-1})=\bar\xvar$ (see, e.g., \cite{gardini_critical_2011,liao_snapback_2012}). We call such an orbit $\left\{\bar\xvar^{-k}\right\}_{k=1}^{\infty}$ a (degenerate) homoclinic orbit for the repeller $\bar\xvar$. The existence of homoclinic orbits guarantees the existence of snapback repellers in the repelling neighborhood of $\bar\xvar$. Marotto's theorem thus holds without knowing the repelling neighborhood of a fixed point.

%


In this paper, we adopt the first direction to establish the existence of snapback repellers. Therefore, estimating the repelling neighborhood for a repeller is the key step in utilizing Marotto's theorem. Moreover, a computable norm is needed for practical computations. Previous results based on the application of Marotto's theorem in the literature were almost considered under the Euclidean norm. Below, we list several techniques used to determine whether a repelling fixed point is a snapback repeller. A few studies combining mathematical analysis with complex manual processing may be found in \cite{jing_bifurcation_2004, wang_bifurcation_2010, zhao_bifurcation_2021}. In addition, numerical computation techniques have been used to prove the existence of snapback repellers (see, e.g., \cite{peng_numerical_2007, liao_snapback_2012}). On the other hand, confirming that some preimage of a repelling fixed point lies in the repelling neighborhood of this fixed point is a nontrivial task. Notably, Liao and Shih \cite{liao_snapback_2012} gave an estimation of the radius of the repelling neighborhood for a repelling fixed point. This estimation is of essential and practical significance to numerical computations of snapback points (see \cite[Section 2]{liao_snapback_2012}). Note that all the examples presented in the aforementioned work, the radius of repelling neighborhood depends heavily on the parameter values. In this paper, we will analyze snapback repellers of discrete dynamical systems automatically by using symbolic computation methods. More precisely, our method can produce parameter sample points under which a snapback repeller may exist. Then, we employ the trial and error correction method to select parameter points, where a snapback repeller indeed exists. Remark that a prior radius size of the repelling neighborhood may be given to make the selected parameter points more sufficient. See more details in Section 4.3.

\subsection{Might (0,0) be a snapback repeller?}

We start by considering the third condition of Definition \ref{def:snapback}, which can be characterized by the following semi-algebraic system with equations and inequations.

\begin{equation}\label{eq:iter-m}
\left\{\begin{split}
&{F}^{m}({\xvar}_0)-{{\xvar^*}}=0,\\
&{\xvar}_0\neq {{\xvar^*}},\\
&|{F}'({\xvar}_k)|\neq0,~ k=1,\ldots,m,
\end{split}\right.
\end{equation}
where $F'$ is the Jacobian matrix of $F$ and $|\cdot|$ represents the determinant. One can see that the Jacobian matrix of map \eqref{eq:map-sym} evaluated at a fixed point $\xvar^*=(x^*,y^*)$ is given by
\begin{equation}\label{eq:jab-sym}
\begin{split}
M(\xvar^*)=\left[\begin{array}{cc}
    1-\rho & \rho\mu(1-2\, y^*)\\
    \rho\mu(1-2\, x^*) & 1-\rho\\
  \end{array}\right]. 
\end{split}
\end{equation}

As an example, we only consider the possibility of $E_0$ to be a 2-snapback repeller due to space limitations. Other equilibria and other values of $m$ can be similarly discussed. We should emphasize that the computational complexity of our approach increases rapidly as $m$ grows. 

Assume that $\xvar_0=(x_0,y_0)$. Then, we have 
$$\xvar_1=(x_1,y_1)=((1-\rho)x_{0}+\rho\mu y_{0}(1-y_0),(1-\rho)y_{0}+\rho\mu x_{0}(1-x_0)),$$
and
$$\xvar_2=(x_2,y_2)=((1-\rho)x_{1}+\rho\mu y_{1}(1-y_1),(1-\rho)y_{1}+\rho\mu x_{1}(1-x_1)).$$
It is known that $\xvar_2=E_0=(0,0)$ since 2-snapback repellers are discussed. Consequently, system \eqref{eq:iter-m} can be written as
\begin{equation}\label{eq:iter}
\left\{
\begin{split}
&P_1\equiv \big(-x_{{0}}^{4}+2x_{{0}}^{3}-x_{{0}}^{2}\big)\mu^{3
}\rho^{3}+\big(-2x_{{0}}^{2}y_{{0}}+2x_{{0}}y_{{0}}\big)\mu^{2}\rho^{3}
+\big(2x_{{0}}^{2}y_{{0}}-x_{{0}}^{2}-2x_{{0}}y_{{0}}+x_{{0}}\big)\mu^{2}\rho^{2}\\
&\qquad-\mu\rho^{3}y_{{0}}^{2}+\big(3y_{{0}}^{2}-2y_{{0}}\big)\mu\rho^{2}
+\big(-2y_{{0}}^{2}+2y_{{0}}\big)\rho\mu+x_{{0}}\rho^{2}-2x_{{0}}\rho+x_{{0}}- 0
=0,\\
&P_2\equiv \left(-y_{{0}}^{4}+2y_{{0}}^{3}-y_{{0}}^{2}\right)\mu^{3
}\rho^{3}+\left(-2x_{{0}}y_{{0}}^{2}+2x_{{0}}y_{{0}}\right)\mu^{2}\rho^{3}
+\left(2x_{{0}}y_{{0}}^{2}-2x_{{0}}y_{{0}}-y_{{0}}^{2}+y_{{0}}\right)\mu^{2}\rho^{2}\\
&\qquad-\mu\rho^{3}x_{{0}}^{2}+\left(3x_{{0}}^{2}-2x_{{0}}\right)\mu
\rho^{2}+\left(-2x_{{0}}^{2}+2x_{{0}}\right)\rho\mu+y_{{0}}\rho^{2}
-2y_{{0}}\rho+y_{{0}}- 0=0,\\
&( 0-x_0)^2+( 0-y_0)^2\neq 0,\\
&|M((1-\rho)x_{0}+\rho\mu y_{0}(1-y_0),(1-\rho)y_{0}+\rho\mu x_{0}(1-x_0))|\neq0,~|M(0, 0)|\neq0,
\end{split}
\right.
\end{equation}
where $M$ is the Jacobian matrix of Kopel's map given in \eqref{eq:jab-sym}. One can see that this is a semi-algebraic system with variables $x_0,y_0$ and parameters $\rho,\mu$.

From a computational point of view, it is not practical to verify the conditions for the original definition of repellers. However, the following lemma, initially derived by Li and Chen \cite{li_improved_2003}, gives a computational method for identifying a repeller under the Euclidean norm.

\begin{lemma}\label{lem:norm2}
Let $\parallel\cdot\parallel_2$ be the Euclidean norm. Suppose that $\xvar^*$ is a fixed point of $F$ and $F$ is continuously differentiable in some closed ball $B_r(\xvar^*)$.
If all eigenvalues of $(F'({\xvar^*}))^T F'({\xvar^*})$ are greater than $1$, then there is $s>1$ and $r'\in (0,r]$ such that 
$$\parallel F(\xvar)-F(\yvar) \parallel_2 > s\cdot \parallel \xvar-\yvar \parallel_2,$$
for all $\xvar,\yvar\in B_{r'}(\xvar^*)$ with $\xvar\neq \yvar$, and all eigenvalues of $(F'(\xvar))^T F'(\xvar)$ exceed $1$ for any $\xvar\in B_{r'}( \xvar^*)$.
\end{lemma}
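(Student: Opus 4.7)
The plan is to leverage the identity $\|Av\|_2^2 = v^T A^T A v$, which ties the Euclidean norm of a matrix-vector product to the spectrum of $A^T A$. Setting $A = F'(\xvar^*)$ and letting $\lambda_{\min}$ denote the smallest eigenvalue of $A^T A$, the spectral theorem applied to the symmetric positive-definite matrix $A^T A$ combined with the hypothesis $\lambda_{\min} > 1$ yields $\|A v\|_2 \geq \sqrt{\lambda_{\min}}\,\|v\|_2$ for every $v$. This is essentially the desired estimate at the single point $\xvar^*$; the remaining work is to propagate it to a whole ball around $\xvar^*$.

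First I would pick a constant $s$ with $1 < s < \sqrt{\lambda_{\min}}$. Since the entries of $(F'(\xvar))^T F'(\xvar)$ depend continuously on $\xvar$ and the eigenvalues of a symmetric matrix depend continuously on its entries, the map $\xvar \mapsto \lambda_{\min}((F'(\xvar))^T F'(\xvar))$ is continuous at $\xvar^*$. Hence there exists $r_1 \in (0, r]$ such that every eigenvalue of $(F'(\xvar))^T F'(\xvar)$ exceeds $1$ (in fact exceeds $s^2$) for all $\xvar \in B_{r_1}(\xvar^*)$. This already furnishes the second half of the conclusion.

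Next, for any $\xvar, \yvar$ in a convex Euclidean ball $B_{r'}(\xvar^*)$ with $r' \leq r_1$, I would invoke the fundamental theorem of calculus along the segment $\gamma(t) = \yvar + t(\xvar - \yvar)$, writing
\[
F(\xvar) - F(\yvar) \;=\; A(\xvar - \yvar) \;+\; \int_0^1 \bigl[F'(\gamma(t)) - A\bigr](\xvar - \yvar)\, dt.
\]
The leading linear term has Euclidean norm at least $\sqrt{\lambda_{\min}}\,\|\xvar - \yvar\|_2$, whereas the integral is bounded in norm by $\sup_{z \in B_{r'}(\xvar^*)} \|F'(z) - A\|_2 \cdot \|\xvar - \yvar\|_2$. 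Continuity of $F'$ forces this supremum to tend to $0$ as $r' \to 0$, so I can shrink $r'$ to make it strictly smaller than $\sqrt{\lambda_{\min}} - s$. The reverse triangle inequality then delivers $\|F(\xvar) - F(\yvar)\|_2 > s\,\|\xvar - \yvar\|_2$ whenever $\xvar \neq \yvar$.

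The main subtlety I expect is that one cannot directly integrate a pointwise lower bound $\|F'(\gamma(t))(\xvar - \yvar)\|_2 \geq s\,\|\xvar - \yvar\|_2$ to obtain a lower bound on the norm of the integral, since the integrand vectors at different $t$ could partially cancel. The remedy is exactly the decomposition above: isolating the constant linear part $A(\xvar - \yvar)$ and treating the remainder as a perturbation controlled uniformly by continuity of $F'$ lets the reverse triangle inequality be applied once globally, rather than attempted under the integral sign. Beyond this, I do not foresee further obstacles.
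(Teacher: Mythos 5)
Your argument is correct. Note that the paper itself does not prove this lemma: it is quoted as a known result of Li and Chen (their ``improved version of the Marotto theorem''), so there is no in-paper proof to compare against. On its own merits, your proof is complete: the spectral bound $\|A v\|_2 \geq \sqrt{\lambda_{\min}}\,\|v\|_2$ for $A = F'(\xvar^*)$, the continuity of the smallest eigenvalue of $(F'(\xvar))^T F'(\xvar)$ to secure the second half of the conclusion on a small ball, and the decomposition $F(\xvar)-F(\yvar) = A(\xvar-\yvar) + \int_0^1 [F'(\gamma(t))-A](\xvar-\yvar)\,dt$ with the perturbation term made uniformly smaller than $(\sqrt{\lambda_{\min}}-s)\|\xvar-\yvar\|_2$ all fit together, and the strictness of the final inequality is preserved because the perturbation bound is strict for $\xvar\neq\yvar$. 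You are also right to flag the subtlety you identify: the pointwise bound $\|F'(\gamma(t))v\|_2 \geq s\|v\|_2$ cannot be integrated directly because the vectors $F'(\gamma(t))v$ may partially cancel, and indeed this is exactly the pitfall that has caused trouble in some published applications of Marotto-type lemmas; isolating the constant linear part and treating the rest as a small perturbation, as you do, is the clean way around it and is why the conclusion is genuinely local (it holds on a sufficiently small ball) rather than on any region where the eigenvalue condition happens to hold.
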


One of the main contributions of the work \cite{huang_analysis_2019} by Huang and Niu is providing an algebraic criterion for identifying whether all eigenvalues of a given characteristic polynomial lie outside the unit circle. In the sequel, we recall this algebraic criterion. Readers can refer to \cite{bistritz_zero_1984} for additional details. Suppose that the characteristic polynomial is of the form
\begin{equation}\nonumber
\begin{split}
D(\lambda)=d_0+d_1\lambda+\ldots+d_n\lambda^n,
\end{split}
\end{equation}
where $d_i=d_i(\uvar,\xvar)$, $i=0,\ldots,n$. Then, denote by $D^{*}(\lambda)$ the reciprocated polynomial of $D(\lambda)$, namely 
$$D^{*}(\lambda)=\lambda^{n}D(\lambda^{-1})=d_{n}+d_{n-1}\lambda+\cdots+d_{0}\lambda^{n}.$$

For the polynomial $D(\lambda)$, we can assign to it a so-called \emph{zero discriminant sequence} of $n+1$ polynomials $T_{n}(\lambda),T_{n-1}(\lambda),\cdots,T_{0}(\lambda)$ according to the following recursion formula.
\begin{equation}\label{eq:T_k}
\begin{split}
&T_{n}(\lambda)=D(\lambda)+D^{*}(\lambda),\\
&T_{n-1}(\lambda)=[D(\lambda)-D^{*}(\lambda)]/(\lambda-1),\\
&T_{k-2}(\lambda)=\lambda^{-1}[\delta_{k}(\lambda+1)T_{k-1}(\lambda)-T_{k}(\lambda)],~~k=n,n-1,\ldots,2,
\end{split}
\end{equation}
where $\delta_{k}=T_{k}(0)/T_{k-1}(0)$. It is evident that the recursive construction requires the normal conditions 
$$T_{n-i}(0)\neq0,~~ i=0,1,\ldots,n.$$ 
The construction is interrupted when $T_{k}(0)=0$ occurs for some $k$. As stated in \cite{bistritz_zero_1984}, such singular cases can be classified into two types. The following lemma is one of the main results of \cite{huang_analysis_2019}, which shows that there is no need to consider the singular cases.
\begin{lemma}\label{lem:var}
All zeros of $D(\lambda)$ lie outside the unit circle if and only if the normal conditions $T_{n-i}(0)\neq0,~ i=0,1,\ldots,n$ hold and $\mbox{Var}(T_n(1),\ldots,T_0(1))=n$, where $\mbox{Var}(T_n(1),\ldots,T_0(1))$ denotes the number of sign variations of the sequence $T_n(1),\ldots,T_0(1)$.
\end{lemma}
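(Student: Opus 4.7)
The plan is to reduce the claim to a standard stability test by exploiting the symmetry of the $T_k$ recursion. First, I would observe that $D(\lambda)$ has all its zeros outside the closed unit disk if and only if the reciprocated polynomial $D^*(\lambda) = \lambda^n D(1/\lambda)$ has all its zeros strictly inside; the lemma is therefore equivalent to a Schur-type stability criterion for $D^*$, and the overall strategy is to show that the sequence $T_n,\dots,T_0$ constructed from $D$ encodes exactly the zero-counting data furnished by the Bistritz test.

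The next step is to establish the structural invariants of the $T_k$ sequence. A direct computation shows $T_n = D + D^*$ and $T_{n-1} = (D - D^*)/(\lambda-1)$ are both self-reciprocal (palindromic) of degrees $n$ and $n-1$; divisibility of $D - D^*$ by $\lambda - 1$ follows from $D(1) = D^*(1)$. I would then prove by induction on $k$, using the recursion $T_{k-2}(\lambda) = \lambda^{-1}[\delta_k(\lambda+1)T_{k-1}(\lambda) - T_k(\lambda)]$ with $\delta_k = T_k(0)/T_{k-1}(0)$, that each $T_k$ is palindromic of degree exactly $k$. The normal conditions $T_{n-i}(0) \neq 0$ are precisely what is needed to keep $\delta_k$ well-defined and to preserve the degree drop at every step, so one direction of the equivalence (necessity of the normal conditions) is immediate from their role in the construction.

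With palindromic structure in hand, the zeros of every $T_k$ come in reciprocal pairs $(\lambda, 1/\bar\lambda)$ together with possible zeros on the unit circle. The core of the proof is to track, via an induction on the recursion, the number of zeros of $T_k$ inside the open unit disk. A winding-number argument, or equivalently the classical interlacing property established in \cite{bistritz_zero_1984}, shows that the transition from $(T_k, T_{k-1})$ to $(T_{k-1}, T_{k-2})$ transports these zero counts in a Sturm-like fashion governed by the sign of $\delta_k\, T_k(1)\, T_{k-1}(1)$. Collecting the contributions along the descent identifies $\mathrm{Var}(T_n(1),\dots,T_0(1))$ with the number of zeros of $D$ lying outside the unit disk, so that $\mathrm{Var}=n$ is equivalent to the statement that all $n$ zeros of $D$ are outside.

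The main obstacle will be this third step: rigorously verifying that the recursion moves zero counts in the claimed manner and, in particular, ruling out zeros of any $T_k$ on the unit circle, which would manifest as sign-variation degeneracies and break the counting argument. The normal conditions excise precisely the two singular situations classified in \cite{bistritz_zero_1984}, so the proof must argue that, together with palindromic symmetry, they suffice to prevent unit-circle zeros for any $T_k$ throughout the descent. Once that technical point is secured, the equivalence in the statement follows by assembling the invariants established in the induction.
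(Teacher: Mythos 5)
First, a point of reference: the paper does not prove this lemma at all --- it is imported verbatim as one of the main results of \cite{huang_analysis_2019}, which in turn is built on the Bistritz test of \cite{bistritz_zero_1984} --- so your proposal has to stand on its own rather than be compared with an in-paper argument. Your skeleton assembles the right machinery: the symmetry of $T_n=D+D^*$ and $T_{n-1}=(D-D^*)/(\lambda-1)$, the inductive preservation of the palindromic structure and of the degree drop through the recursion, and the identification of $\mathrm{Var}(T_n(1),\dots,T_0(1))$ with the number of zeros of $D$ outside the unit circle in the normal case. But that identification \emph{is} the content of Bistritz's theorem, and you defer it wholesale to ``a winding-number argument, or equivalently the classical interlacing property''; as written, even the ``if'' direction is a citation rather than a proof.

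The genuine gap is in the ``only if'' direction. You assert that necessity of the normal conditions is ``immediate from their role in the construction.'' It is not: the fact that the recursion needs $T_k(0)\neq 0$ to be carried out says nothing about whether these conditions are \emph{implied} by the hypothesis that all zeros of $D$ lie outside the unit circle. The classical normal-case count presupposes normality; the added value of the lemma --- and the very reason the paper quotes it, namely that ``there is no need to consider the singular cases'' --- is the claim that if all $n$ zeros of $D$ are outside the unit circle, then no singular case of either kind can occur, so that failure of a normal condition already certifies a zero on or inside the circle. Without that separate argument, your proof establishes only sufficiency. Your closing paragraph moreover chases the wrong implication: you propose to show that the normal conditions ``prevent unit-circle zeros for any $T_k$,'' but the $T_k$ are self-reciprocal and for odd $k$ necessarily vanish at $\lambda=-1$, so they always have unit-circle zeros; the singular cases concern $T_k(0)=0$ (or $T_k\equiv 0$), not zeros of the $T_k$ on the circle. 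To close the argument you would need to reproduce Bistritz's zero-location count and then prove the singularity-exclusion statement of \cite{huang_analysis_2019}, or else cite the latter directly, as the paper does.
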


\begin{remark}
	The condition $\mbox{Var}(T_n(1),\ldots,T_0(1))=n$ includes two possible cases:
$$T_{n}(1)>0,T_{n-1}<0,\ldots,~~\text{or}~~T_{n}(1)<0,T_{n-1}>0,\ldots,$$
which can be expressed uniformly as
$$(-1)^{i+j-1}T_{n-i}(1)>0,~~i=0,\ldots,n,~~\text{for}~j=1~\text{or}~j=2.$$
\end{remark}

Returning to Kopel's model, the characteristic polynomial of the matrix $(M(\xvar))^T M(\xvar)$ can be written as
\begin{align*}
	\begin{autobreak}
D(\lambda)=
\lambda^{2}
+(
-2
+(
-4x^{2}
-4y^{2}
+4x
+4y
-2)\mu^{2}\rho^{2}
-2\rho^{2}
+4\rho)\lambda 
+(16x^{2}y^{2}
-16x^{2}y 
-16xy^{2}
+4x^{2}
+16xy
+4y^{2}
-4x
-4y
+1)\mu^{4}\rho^{4}
+( 
-8xy
+4x
+4y
-2)\mu^{2}\rho^{4} 
+(16xy
-8x
-8y
+4)\mu^{2}\rho^{3}
+(
-8xy
+4x
+4y
-2)\mu^ {2}\rho^{2}
+\rho^{4}
-4\rho^{3}
+6\rho^{2}
-4\rho
+1.  	
	\end{autobreak}
\end{align*}
The reciprocated polynomial of $D(\lambda)$ is
\begin{align*}
	\begin{autobreak}
D^{*}(\lambda)=
((16x^{2}y^{2}
-16x^{2}y
-16xy^{2}
+4x^{2}
+16xy
+4y^{2}
-4x
-4y
+1)\mu^{4}\rho^{4} 
+(
-8xy
+4x
+4y
-2)\mu^{2}\rho^{4}
+(16xy
-8x
-8y
+4)\mu^{2}\rho^{3}
+(
-8xy
+4x
+4y
-2)\mu^ {2}\rho^{2} 
+\rho^{4}
-4\rho^{3}
+6\rho^{2}
-4\rho
+1)\lambda^{2} 
+(
-2
+(
-4x^{2}
-4y^{2}
+4x
+4y
-2)\mu^{2}\rho^{2}
-2\rho^{2}
+4\rho)\lambda
+1. 	
	\end{autobreak}
\end{align*}
Then, we acquire the zero discriminant sequence
\begin{equation}\label{eq:K-5}\nonumber
\begin{split}
T_{2}(\lambda)=t_{2,2}(\lambda^{2}+1)+t_{2,1}\lambda,~~
T_{1}(\lambda)=t_{1,1}(\lambda+1),~~ T_{0}(\lambda)=t_{0,0},
\end{split}
\end{equation}
where
\begin{align*}
	\begin{autobreak}
t_{2,2}=
2
+(16x^{2}y^{2}
-16x^{2}y
-16xy^{2}
+4x^{2}
+16xy
+4y^{2}
-4x
-4y
+1)\mu^{4}\rho^{4} 
+(
-8xy
+4x
+4y
-2)\mu^{2}\rho^{4}
+(16xy
-8x
-8y
+4)\mu^{2}\rho^{3} 
+(
-8xy
+4x
+4y
-2)\mu^{2}\rho^{2} 
+\rho^{4}
-4\rho^{3}
+6\rho^{2}
-4\rho, 	
	\end{autobreak}\\
	\begin{autobreak}
t_{2,1}=
-4
+(
-8x^{2}
-8y^{2}
+8x
+8y
-4)\mu^{2}\rho^{2}
-4\rho^{2}
+8\rho,
	\end{autobreak}\\
	\begin{autobreak}
t_{1,1}=
-(2
+(4xy
-2x
-2y
+1)\mu^{2}\rho
-\rho)(
-2
+(4xy
-2x
-2y
+1)\mu^{2}\rho^{2} 
-\rho^{2}
+2\rho)\rho,		
	\end{autobreak}\\
	\begin{autobreak}
t_{0,0}=
8
+(32x^{2}y^{2}
-32x^{2}y
-32xy^{2}
+8x^{2}
+32xy
+8y^{2}
-8x
-8y
+2)\mu^{4}\rho^{4} 
+(
-16xy
+8x
+8y
-4)\mu^{2}\rho^{4}
+(32xy
-16x
-16y
+8 )\mu^{2}\rho^{3}
+(8x^{2}
-16xy
+8y^{2})\mu^{2}\rho^{2} 
+2\rho^{4}
-8\rho^{3}
+16\rho^{2}
-16\rho.	
	\end{autobreak}
\end{align*}
%
Consequently, we have that
\begin{align*}
&T_{2}(1)|_{E_0}=2 \rho^{2} \left(\mu-1\right) \left(\mu+1\right) \left(\rho \mu-\rho+2\right) \left(\rho \mu+\rho-2\right),\\
&T_{1}(1)|_{E_0}=-2\rho \left(\rho^{2} \mu^{2}-\rho^{2}+2 \rho-2\right) \left(\rho \mu^{2}-\rho+2\right),\\
&T_{0}(1)|_{E_0}=2\left(\rho^{2} \mu^{2}+2 \rho^{2} \mu+\rho^{2}-2 \rho \mu-2 \rho+2\right) \left(\rho^{2} \mu^{2}-2 \rho^{2} \mu+\rho^{2}+2 \rho \mu-2 \rho+2\right).
\end{align*}
According to Lemmas \ref{lem:norm2} and \ref{lem:var}, the equilibrium $E_0=(0,0)$ is a repeller (satisfies the first condition of Definition \ref{def:snapback}) if 
\begin{equation}\label{eq:TE0}
T_{2}(1)|_{E_0}>0,~
-T_{1}(1)|_{E_0}>0,~
T_{0}(1)|_{E_0}>0,~~
T_{2}(0)|_{E_0}\neq0,~T_{1}(0)|_{E_0}\neq0,~
T_{0}(0)|_{E_0}\neq0,
\end{equation}
or
\begin{equation}\label{eq:TE0-2}
-T_{2}(1)|_{E_0}>0,~
T_{1}(1)|_{E_0}>0,~
-T_{0}(1)|_{E_0}>0,~~
T_{2}(0)|_{E_0}\neq0,~
T_{1}(0)|_{E_0}\neq0,~
T_{0}(0)|_{E_0}\neq0.
\end{equation}
By our numerical simulations, chaos may take place if $\mu$ is large enough. Moreover, one can see that $T_{2}(1)|_{E_0}>0$ if $\mu$ is large enough. Therefore, Eq.\ \eqref{eq:TE0} is more likely to be fulfilled than \eqref{eq:TE0-2}. Below, we only verify the possibility of \eqref{eq:TE0}. From the second condition of Definition \ref{def:snapback}, one knows that $\xvar_0$ is selected from a repelling neighborhood of $E_0$. Thus, $\xvar_0$ should also be a repeller and the following conditions must be satisfied.
\begin{equation}\label{eq:Tx0}
T_{2}(1)|_{\xvar_0}>0,~
-T_{1}(1)|_{\xvar_0}>0,~
T_{0}(1)|_{\xvar_0}>0,~~
T_{2}(0)|_{\xvar_0}\neq0,~
T_{1}(0)|_{\xvar_0}\neq0,~
T_{0}(0)|_{\xvar_0}\neq0.	
\end{equation}

According to Definition \ref{def:snapback}, we need to consider the semi-algebraic system formed by combining Eqs.\ \eqref{eq:iter}, \eqref{eq:TE0}, \eqref{eq:Tx0}, and then detect whether this system possesses at least one real solution. In general, a semi-algebraic system is of the form
\begin{equation}\label{semi-alg}
\left\{
\begin{array}{l}
P_1(u_1, \ldots, u_d,x_1, \ldots, x_n)=0,\\
\qquad\qquad\qquad\vdots \\
P_n(u_1, \ldots, u_d, x_1, \ldots, x_n)=0,\vspace{4pt}\\
Q_1(u_1, \ldots, u_d,x_1, \ldots, x_n)\lessgtr 0,\\
\qquad\qquad\qquad\vdots \\
Q_r(u_1, \ldots, u_d, x_1, \ldots, x_n)\lessgtr 0,
\end{array}
\right.
\end{equation}
where the symbol $\lessgtr$ stands for any of $>$, $\geq$, $<$, $\leq$, and $\neq$, and $P_i, Q_j$ are polynomials over the field of real numbers, with $u_1, \ldots, u_d$ as their parameters and $x_1, \ldots, x_n$ as their variables. Based on the triangular decomposition and resultant methods, we can compute the so-called \emph{border polynomial} of system \eqref{semi-alg}. One of the properties of a border polynomial is that its zeros divide the parameter space into several connected regions and the number of real solutions of \eqref{semi-alg} can not change if we vary the parameter point inside the same region. Readers can refer to \cite{Li2014C, Yang2001A} for more information regarding border polynomials. Actually, the basic idea of computing border polynomials has been applied in Section 3 to establish conditions for distinct number of stable equilibria.

Herein, the resulting border polynomial $BP$ of system \eqref{eq:iter}+\eqref{eq:TE0}+\eqref{eq:Tx0} is extremely complicated and takes more than two pages to present. In Figure \ref{fig:bpsnapback}, we use the Maple function ``implicitplot'' to depict the curve of $BP=0$ approximately. Accordingly, one can understand how complex this border polynomial could be. 


\begin{figure}[htbp]
  \centering
  \subfloat[$(\rho,\mu)\in (0,1)\times(0,10)$]{\includegraphics[width=0.45\textwidth]{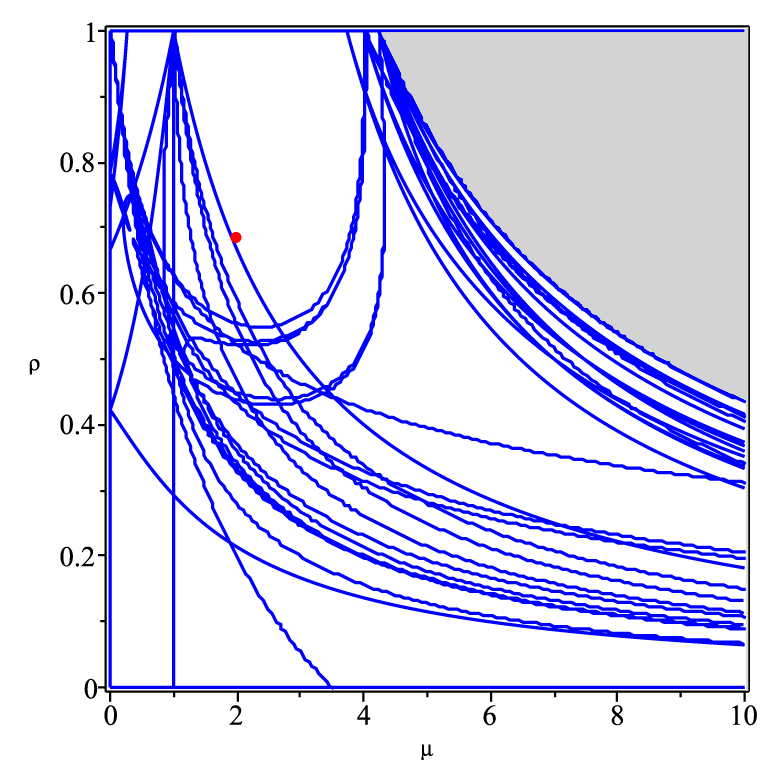}} 
  \subfloat[$(\rho,\mu)\in (0,1)\times(10,60)$]{\includegraphics[width=0.45\textwidth]{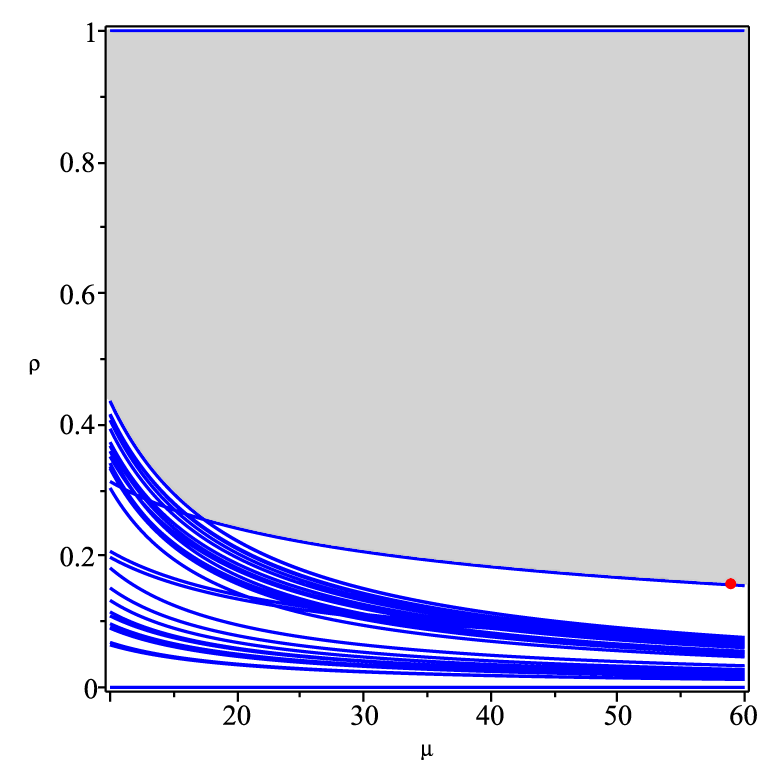}} \\
  \caption{Zeros of the border polynomial of the system \eqref{eq:iter}+\eqref{eq:TE0}+\eqref{eq:Tx0}.}
\label{fig:bpsnapback}
\end{figure}
As shown by Figure \ref{fig:bpsnapback}, the curve of $BP=0$ divides the parameter plane into many connected regions. According to the properties of border polynomials, the number of real solutions will not change if we vary the parameter point inside the same region. Because of the complexity of $BP$, the selection of sample points is thus complicated, which can be automated by using the PCAD method. Based on our computations, 4324 sample points are selected, of which there are 1845 sample points where system \eqref{eq:iter}+\eqref{eq:TE0}+\eqref{eq:Tx0} has at least one real solution. We list nine of these 1845 sample points in Table \ref{tab:sample}.

\begin{table}[htbp]
	\centering 	
\caption{Some sample points where \eqref{eq:iter}+\eqref{eq:TE0}+\eqref{eq:Tx0} has at least one real solution}
\label{tab:some-sample} 
\begin{tabular}{|l||l||l|}
  \hline
$\rho=175/256, \mu=2$ & $\rho=5/32, \mu=59$ & $\rho=1/8, \mu=16$\\
  \hline
$\rho=9/32, \mu=7$ & $\rho=7/8, \mu=2$ & $\rho=1537/2048, \mu=6$\\
  \hline
$\rho=7/16, \mu=29/8$ &$\rho=41/64, \mu=3$ &$\rho=15/16, \mu=2$\\
  \hline
\end{tabular}
\end{table}

We should mention that the main theorem (Theorem 4.2) of \cite{huang_analysis_2019} that provides conditions for the existence of snapback repellers is insufficient. That is to say, for any parameter point in the same region where any of the aforementioned 1845 sample points lies, it is probable but not guaranteed that a snapback repeller exists. For example, in Section \ref{sec:validation} readers will see that at the first sample point $(\rho,\mu)=(175/256,2)$ listed in Table \ref{tab:some-sample}, $E_0$ is not a 2-snapback repeller, which means that the conditions of \cite[Theorem 4.2]{huang_analysis_2019} are insufficient. Additional steps are needed to further validate the existence of snapback repellers, which we illustrate in Section \ref{sec:validation}.

\subsection{Final validation and formal proof}\label{sec:validation}

Although any solution $\xvar_0$ of \eqref{eq:iter}+\eqref{eq:TE0}+\eqref{eq:Tx0} is a repeller and satisfies the third condition of Definition \ref{def:snapback}, the second condition of Definition \ref{def:snapback} may not be fulfilled. That is to say, we may not find a repelling neighborhood $B_r(E_0)$ such that $\xvar_0\in B_r(E_0)$. For example, when $\rho=175/256$ and $\mu=2$, we have
\begin{align*}
P_1=&-\frac{5359375}{2097152} x_0^{4}+\frac{5359375}{1048576} x_0^{3}+\frac{2480625}{2097152} y_0 x_0^{2}-\frac{9279375}{2097152} x_0^{2}-\frac{2480625}{2097152} y_0 x_0\\
&-\frac{4776975}{8388608} y_0^{2}+\frac{129061}{65536} x_0+\frac{14175}{16384} y_0,\\
P_2=&-\frac{5359375}{2097152} y_0^{4}+\frac{2480625}{2097152} x_0 y_0^{2}+\frac{5359375}{1048576} y_0^{3}-\frac{4776975}{8388608} x_0^{2}-\frac{2480625}{2097152} y_0 x_0\\
&-\frac{9279375}{2097152} y_0^{2}+\frac{14175}{16384} x_0+\frac{129061}{65536} y_0,
\end{align*}
which can be approximately solved by
$$(0,0),~(1.1667,1.1667),~(-0.165, 0.966),~(0.940, -0.142).$$
It is evident that at $(x_0,y_0)=(0,0)$, the condition $( 0-x_0)^2+( 0-y_0)^2\neq 0$ in \eqref{eq:iter} is not fulfilled. However, the other three solutions satisfy \eqref{eq:iter}+\eqref{eq:TE0}+\eqref{eq:Tx0}. 

Figure \ref{fig:no-snapback} depicts these three non-zero solutions. The considered equilibrium $E_0$ is colored in red, while the non-zero solutions are colored in blue. The regions where $T_2(1)>0$, $T_1(1)<0$, and $T_0(1)>0$ holds are marked in yellow. Furthermore, we mark the curves $T_2(1)=0$, $T_1(1)=0$, and $T_0(1)=0$ in red, green and blue, respectively. However, the blue curve does not appear in the plotted window $[-0.5,1.5]\times [-0.5,1.5]$. One can observe that it is impossible to find a closed ball $B_r(E_0)$ such that every point in $B_r(E_0)$ is a repeller ($B_r(E_0)$ lies in the yellow regions) and one of the blue points is in $B_r(E_0)$. Consequently, no snapback repellers exist when $\rho=175/256$ and $\mu=2$.

\begin{figure}[htbp]
    \centering
    \includegraphics[width=0.49\textwidth]{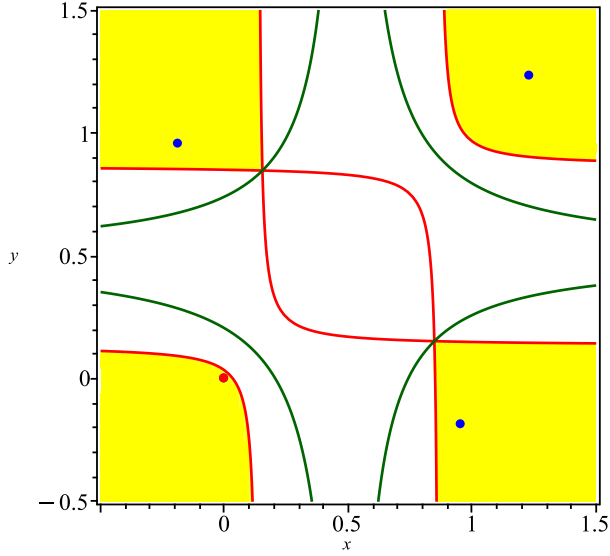}
    \caption{The (yellow) region of repellers when $\rho=175/256$ and $\mu=2$. The fixed point $E_0$ is marked in red, while the corresponding points of $\xvar_0$ for $E_0$ are marked in blue.}
    \label{fig:no-snapback}
\end{figure}

In comparison, we consider $\rho = 5/32$ and $\mu = 59$. It is obtained that
\begin{align*}
P_1=& -\frac{25672375}{32768} x_0^{4}+\frac{25672375}{16384} x_0^{3}+\frac{2349675}{16384} y_0 \,x_0^{2}-\frac{28457175}{32768} x_0^{2}-\frac{2349675}{16384} y_0 x_0\\
&-\frac{469935}{32768} y_0^{2}+\frac{43877}{512} x_0+\frac{7965}{512} y_0,\\
P_2=& -\frac{25672375}{32768} y_0^{4}+\frac{2349675}{16384} x_0 \,y_0^{2}+\frac{25672375}{16384} y_0^{3}-\frac{469935}{32768} x_0^{2}-\frac{2349675}{16384} y_0 x_0\\
&-\frac{28457175}{32768} y_0^{2}+\frac{7965}{512} x_0+\frac{43877}{512} y_0,
\end{align*}
which can be solved by the following float-point solutions using numerical algorithms.
\begin{equation}\label{eq:x0-float}
\begin{split}
&(0,0),~(1.092,1.091),~(-0.082,0.867),~(-0.020,0.125),~\\
&(0.015,1.010),~(0.125,-0.020),~(0.867,-0.082),~(0.992,1.091),~\\
&(1.010,0.015),~(1.092,0.992),~(0.041,0.868),~(0.868,0.041),~\\
&(-0.084,0.992),~(0.992,-0.084),~(0.122,0.122),~(0.969,0.969).	
\end{split}
\end{equation}

Figure \ref{fig:snapback} presents these solutions. The equilibrium $E_0=(0,0)$ is the red point, while the other non-zero solutions are the blue ones. In addition, the regions where $T_2(1)>0$, $T_1(1)<0$, and $T_0(1)>0$ hold are also colored in yellow. Therefore, we could construct the closed ball $B_r(E_0)$ with $r=1/5$, i.e., $x^2+y^2\leq (1/5)^2$. In Figure \ref{fig:snapback}, we plot the edge of this ball with a purple circle. Then, any of the blue points inside the circle can be selected as the $\xvar_0$ that satisfies the conditions of Definition \ref{def:snapback}.

\begin{figure}[htbp]
    \centering
    \includegraphics[width=0.49\textwidth]{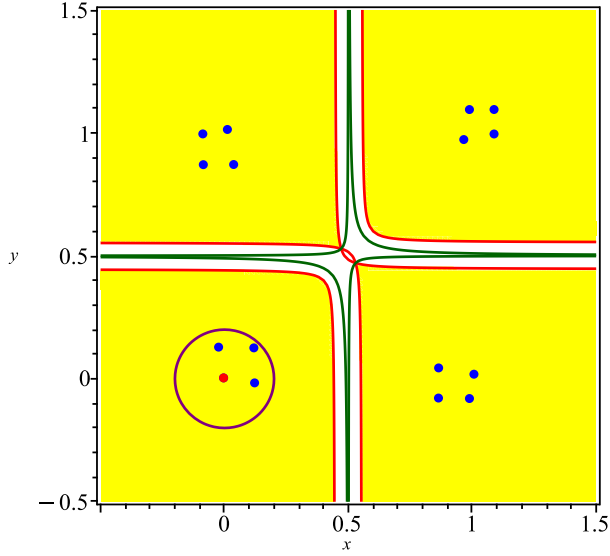}
    \caption{The (yellow) region of repellers when $\rho = 5/32$ and $\mu = 59$. The fixed point $E_0$ is marked in red, while the corresponding points of $\xvar_0$ for $E_0$ are marked in blue.}
    \label{fig:snapback}
\end{figure}

However, the above observations are based on numerical plotting, which may be unreliable. Furthermore, the float-point solutions \eqref{eq:x0-float} produced by numerical algorithms may not be reliable either. In the sequel, we would like to prove the existence of repellers rigorously by replacing those numerical steps with symbolic ones.

\begin{theorem}\label{thm:snapback}
Let $\rho = 5/32$ and $\mu = 59$. Then, $E_0=(0,0)$ is a snapback repeller of map \eqref{eq:map-sym}, which proves that chaotic dynamics exist in the sense of Li--Yorke.
\end{theorem}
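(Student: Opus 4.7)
The plan is to verify all three conditions of Definition~\ref{def:snapback} in a purely symbolic way at $(\rho,\mu)=(5/32,59)$, using the candidate radius $r=1/5$ suggested by Figure~\ref{fig:snapback} together with a preimage $\xvar_0$ lying on the diagonal inside $B_{1/5}(E_0)$. First I would substitute the rational parameter values into $T_2(1)|_{E_0}$, $T_1(1)|_{E_0}$, $T_0(1)|_{E_0}$ and the normal conditions $T_i(0)|_{E_0}$, and confirm by exact rational arithmetic that the sign pattern \eqref{eq:TE0} holds; by Lemmas~\ref{lem:norm2} and \ref{lem:var} this certifies that $E_0$ is a repeller in some sufficiently small closed ball.

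The second step is to promote ``sufficiently small'' to the explicit radius $r=1/5$. Concretely, I would prove that
\[
T_2(1)|_{\xvar}>0,\qquad -T_1(1)|_{\xvar}>0,\qquad T_0(1)|_{\xvar}>0
\]
hold throughout the disk $\{(x,y):x^2+y^2\le 1/25\}$. Each left-hand side, after specializing $\rho,\mu$, is a bivariate polynomial of modest degree; I would parametrize the boundary by $(x,y)=(t,\pm\sqrt{1/25-t^2})$, square to eliminate the radical, and apply Sturm sequences (or the PCAD method) to isolate the real roots of the resulting univariate polynomials on $t\in[-1/5,1/5]$, thereby certifying that none of the three expressions vanishes on the boundary. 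Sign-checking at the interior point $(0,0)$ then propagates the signs across the connected disk, yielding the repelling neighborhood $B_{1/5}(E_0)$.

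Third, I would establish the existence of $\xvar_0\in B_{1/5}(E_0)\setminus\{E_0\}$ with $F^2(\xvar_0)=E_0$ by applying triangular decomposition to the specialized system $P_1(x_0,y_0)=P_2(x_0,y_0)=0$ from \eqref{eq:iter}. The evident symmetry $P_2(x,y)=P_1(y,x)$ invites splitting the decomposition into a diagonal component $x_0=y_0$ and an off-diagonal component. On the diagonal, the system collapses to a single univariate polynomial; Sturm-sequence isolation should exhibit a real root $x_0=y_0$ inside a rational interval strictly contained in $(-1/\sqrt{50},\,1/\sqrt{50})$, matching the candidate $(0.122,0.122)$ in the list \eqref{eq:x0-float}. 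The same isolating interval lets me bound $|M(\xvar_0)|$ and $|M(F(\xvar_0))|$ away from zero by interval arithmetic, establishing the nondegeneracy of the Jacobians along the orbit. All three conditions of Definition~\ref{def:snapback} then hold, and Lemma~\ref{thm:li-chaos} delivers Li--Yorke chaos.

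The main obstacle is the second step. With $\mu=59$ the coefficients of $T_i(1)|_{\xvar}$ are large, so the repelling region pinches toward the coordinate axes as one moves away from the origin, and $r=1/5$ is not obviously safe; the univariate polynomials produced by the boundary reduction may have real roots uncomfortably close to $\pm 1/5$, so the certification must be carried out in exact rational arithmetic, and any floating-point shortcut would reintroduce exactly the unreliability that symbolic computation is meant to eliminate. If $r=1/5$ were to fail the sign test, the natural fallback is to shrink $r$ until the Sturm-sequence test succeeds, while keeping the certified diagonal preimage of step three safely inside the shrunken disk.
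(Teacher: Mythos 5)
Your overall strategy mirrors the paper's: verify the sign conditions \eqref{eq:TE0} exactly at $(\rho,\mu)=(5/32,59)$, certify that the ball $x^2+y^2\le (1/5)^2$ is a repelling neighborhood by showing $C_i=(-1)^iT_i(1)|_{\rho=5/32,\mu=59}>0$ for $i=2,1,0$ on the whole disk, and isolate a nonzero preimage of $E_0$ under $F^2$ inside that disk. The preimage step is sound (the paper isolates three such points with rational boxes via real root isolation; your diagonal reduction is a legitimate simplification, though note that Definition~\ref{def:snapback} asks for $|F'(\xvar_1)|\neq 0$ and $|F'(\xvar_2)|=|F'(E_0)|\neq 0$ rather than $|F'(\xvar_0)|\neq 0$).

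The gap is in your second step. Certifying that the three polynomials do not vanish on the boundary circle $x^2+y^2=1/25$ and are positive at the single interior point $(0,0)$ does not imply positivity throughout the disk: the real curve $C_i=0$ could contain a compact oval lying entirely in the open disk, inside which $C_i$ is negative, without ever meeting the boundary. (Compare $C(x,y)=(x-1/10)^2+y^2-1/1000$, which is positive at the origin and on the circle of radius $1/5$, yet negative near $(1/10,0)$.) Sign propagation from one sample point is only valid within a single connected component of the set $\{C_i\neq 0\}$ intersected with the disk, and your boundary test does not establish that the disk meets only one such component. The paper closes exactly this hole by running PCAD on the genuinely two-dimensional system $x^2+y^2\le(1/5)^2$, $C_i\neq 0$, and observing that the decomposition returns the single sample point $(0,0)$, i.e., that the curves $C_i=0$ miss the disk entirely. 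You would need an analogous two-dimensional emptiness certificate (for instance, that $C_i=0$ together with $x^2+y^2\le 1/25$ has no real solution), not merely a boundary check. A secondary, lesser point: even with $C_i>0$ on the whole ball, passing from ``all eigenvalues of $M(\xvar)^TM(\xvar)$ exceed $1$ pointwise'' to the expansion inequality on the ball of the stated radius requires an additional argument (the paper defers this to a remark invoking the estimate of Liao and Shih), which your proposal also leaves implicit.
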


\begin{proof}
The first condition of Definition \ref{def:snapback} is fulfilled since $\rho = 5/32$ and $\mu = 59$ are acquired by assuming \eqref{eq:TE0}. According to Lemma \ref{lem:norm2}, $E_0$ is a repeller under the Euclidean norm. The third condition of Definition \ref{def:snapback} is obvious because $\xvar_0$ is computed from \eqref{eq:iter}.

To validate the second condition of Definition \ref{def:snapback}, we need to find a closed ball that is a repelling neighborhood of $E_0$, and then prove that one of the solutions of system \eqref{eq:iter}+\eqref{eq:TE0}+\eqref{eq:Tx0}, i.e., $\xvar_0$ lies in this ball. Denote $C_i=(-1)^{i}T_i(1)|_{\rho=5/32,\mu=59}$, $i=2,1,0$. Then,
\begin{align*}
C_2=&\frac{7573350625}{32768} x^{2} y^{2}-\frac{7573350625}{32768} x^{2} y-\frac{7573350625}{32768} x \,y^{2}+\frac{7484237025}{131072} x^{2}+\frac{15083260025}{65536} x y\\
&+\frac{7484237025}{131072} y^{2}-\frac{927599475}{16384} x-\frac{927599475}{16384} y+\frac{113567625}{8192},\\
C_1=&\frac{7573350625}{32768} x^{2} y^{2}-\frac{7573350625}{32768} x^{2} y-\frac{7573350625}{32768} x \,y^{2}+\frac{7573350625}{131072} x^{2}+\frac{15083260025}{65536} x y\\
&+\frac{7573350625}{131072} y^{2}-\frac{938738675}{16384} x-\frac{938738675}{16384} y+\frac{116342985}{8192},\\
C_0=&\frac{7573350625}{32768} x^{2} y^{2}-\frac{7573350625}{32768} x^{2} y-\frac{7573350625}{32768} x \,y^{2}+\frac{7662464225}{131072} x^{2}+\frac{15083260025}{65536} x y\\
&+\frac{7662464225}{131072} y^{2}-\frac{949877875}{16384} x-\frac{949877875}{16384} y+\frac{119183881}{8192}.
\end{align*}
If all points in the closed ball $B_r(E_0)=x^2+y^2\leq (1/5)^2$ satisfy $C_i>0$, $i=2,1,0$, then it can be proved that all eigenvalues of $M(\xvar)^T M(\xvar)$ ($M$ is the Jacobian matrix of map \eqref{eq:map-sym}) are greater than 1 at any point $\xvar\in B_r(E_0)$. The PCAD method permits us to select at least one sample point from each region defined by $x^2+y^2\leq  (1/5)^2$ and $C_i\neq 0$, $i=2,1,0$. We obtain one unique sample point $(0,0)$, which implies that any of $C_i= 0$, $i=2,1,0$, does not intersect with $B_r(E_0)$. It is evident that $C_i>0$ at $(0,0)$ for all $i=2,1,0$, which means that all points in $B_r(E_0)$ satisfy $C_i>0$, $i=2,1,0$. In consequence, all eigenvalues of $M(\xvar)^T M(\xvar)$ are greater than 1 for $x^2+y^2\leq  (1/5)^2$.

The real root isolation method (see, e.g., the modified Uspensky algorithm \cite{Collins1983R}) permits us to isolate each real solution of a semi-algebraic system with a box. Moreover, the size of boxes can be made as small as we want. Accordingly, the three blue points $(x_0,y_0)$ inside the purple circle in Figure \ref{fig:snapback} can be isolated by
\begin{align*}
&(x_0,y_0)\in \left[ -\frac{5279}{262144},-\frac{84463}{4194304}\right]\times \left[ \frac{1025}{8192},\frac{2051}{16384}\right],\\	
&(x_0,y_0)\in \left[\frac{8201}{65536},\frac{65609}{524288} \right]\times \left[-\frac{83}{4096},-\frac{41}{2048} \right],\\	
&(x_0,y_0)\in \left[\frac{128075}{1048576},\frac{32019}{262144} \right]\times \left[\frac{2001}{16384},\frac{1001}{8192} \right].
\end{align*}
It can be verified that all corner points of these boxes satisfy $x^2+y^2\leq (1/5)^2$. That is to say, the three boxes and thus the three points lie inside $B_r(E_0)$. 

In conclusion, the equilibrium $E_0=(0,0)$ is a snapback repeller of map \eqref{eq:map-sym} when $\rho = 5/32$ and $\mu = 59$. Therefore, the existence of chaos in the sense of Li--Yorke follows according to Lemma \ref{thm:li-chaos}, which completes the proof.
\end{proof}

\begin{remark}
We remark that the selected closed ball $B_r(E_0)$ with $r=1/5$ does provide a repelling neighborhood of the repelling fixed point $E_0$. Specifically, for the neighborhood $B_r(E_0)$, one can verify that $s_1=67/8$, $\eta_{r}=59/16$, where the definitions of $s_1$ and $\eta_{r}$ can be found in \cite{liao_snapback_2012}. Then, we have $s_1-\eta_{r}=75/16>1$, which proves that $B_r(E_0)$ is indeed a repelling neighborhood according to \cite[Proposition 2.2]{liao_snapback_2012}. It should be noted that all computations in the above proof are essentially based on rational rather than float-point numbers. Mathematical objects such as rational polynomials and rational intervals are handled. In this regard, readers may understand that the involved computations are symbolic and the obtained results are exact (error-free). Therefore, tools based on symbolic computations can replace pen and paper to prove mathematical theorems in some sense.
\end{remark}

For other parameter points in the region where $(\rho,\mu)=(5/32,59)$ lies (see the light-gray region in Figure \ref{fig:bpsnapback}), we also need to repeat the validation process. We randomly choose more than 100 parameter points in the light-gray region. The computational results show that $E_0$ is a snapback repeller at all these points. In Figure \ref{fig:flowchart}, we depict the flowchart of the above symbolic approach for determining whether an equilibrium is an $m$-snapback repeller at a given parameter point. The output of the approach is simply Yes or No.

\begin{figure}\centering
\begin{tikzpicture}[node distance=10pt]
  \node[draw, rounded corners, text width =10cm, text centered]                        (start)   {\textbf{Input:} an equilibrium $\xvar^*$ and a parameter point $\uvar$.};
  \node[draw, below=of start, text width =10cm, text centered]                         (step 1)  {Construct the following semi-algebraic system $S$, where $j$ equals to 1 or 2, and $T_k$ is defined as in \eqref{eq:T_k}.
  \begin{equation}\nonumber
\left\{\begin{split}
&{F}^{m}({\xvar}_0)-{{\xvar^*}}=0,~{\xvar}_0\neq {{\xvar^*}},~|{F}'({\xvar}_k)|\neq0,~ k=1,\ldots,m,\\
&(-1)^{i+j-1}T_{n-i}(1)|{\xvar^*}>0,~T_{n-i}(0)|{\xvar^*}\neq0,~i=0,\ldots,n,\\
&(-1)^{i+j-1}T_{n-i}(1)|\xvar_0>0,~T_{n-i}(0)|\xvar_0>0,~i=0,\ldots,n.
\end{split}\right.
\end{equation}};
  \node[draw, below=of step 1, text width =10cm, text centered]                        (step 2)  {Compute the border polynomial $BP$ of system $S$.};
  \node[draw, below=of step 2, text width =10cm, text centered]                        (step 3)  {Select at least one sample point from each region of the parameter set divided by $BP=0$.};
  \node[draw, below=of step 3, text width =10cm, text centered]                        (step 4)  {Determine the number of real solutions of $S$ at each selected sample point.};
  \node[draw, below=of step 4, text width =10cm, text centered]                        (step 5)  {Find one sample point $\pvar$ such that $\uvar$ lies in the same region as $\pvar$. This can be achieved if $BP(\uvar)\neq 0$.};
  \node[draw, diamond, aspect=2, below=of step 5, text width =5cm, text centered]  (choice)  
  {Does $S$ possess at least one real solution at $\pvar$?};
  \node[draw, rounded corners, right=20pt of choice, text width =3cm, text centered]       (step no)  
  {\textbf{Output:} No};
  \node[draw, diamond, aspect=2, below=20pt of choice, text width =5cm, text centered]       (step yes)  
  {Is there a repelling neighborhood $B_r(E)$ such that $\xvar_0\in B_r(E)$ at $\uvar$?};
   \node[draw, rounded corners, right=20pt of step yes, text width =3cm, text centered]       (step noBr)  
  {\textbf{Output:} No};
     \node[draw, rounded corners, below=20pt of step yes, text width =3cm, text centered]   (step yesBr)  
  {\textbf{Output:} Yes};
  
  \draw[->] (start)  -- (step 1);
  \draw[->] (step 1) -- (step 2);
  \draw[->] (step 2) -- (step 3);
  \draw[->] (step 3) -- (step 4);
  \draw[->] (step 4) -- (step 5);
  \draw[->] (step 5) -- (choice);
  \draw[->] (choice) -- node[above] {No} (step no);
  \draw[->] (choice) -- node[left] {Yes} (step yes);
  \draw[->] (step yes) -- node[above] {No} (step noBr);
  \draw[->] (step yes) -- node[left] {Yes} (step yesBr);
\end{tikzpicture}
\caption{Flowchart of the symbolic approach for determining whether a given equilibrium $\xvar^*$ is an $m$-snapback repeller at $\uvar$.}
\label{fig:flowchart}
\end{figure}
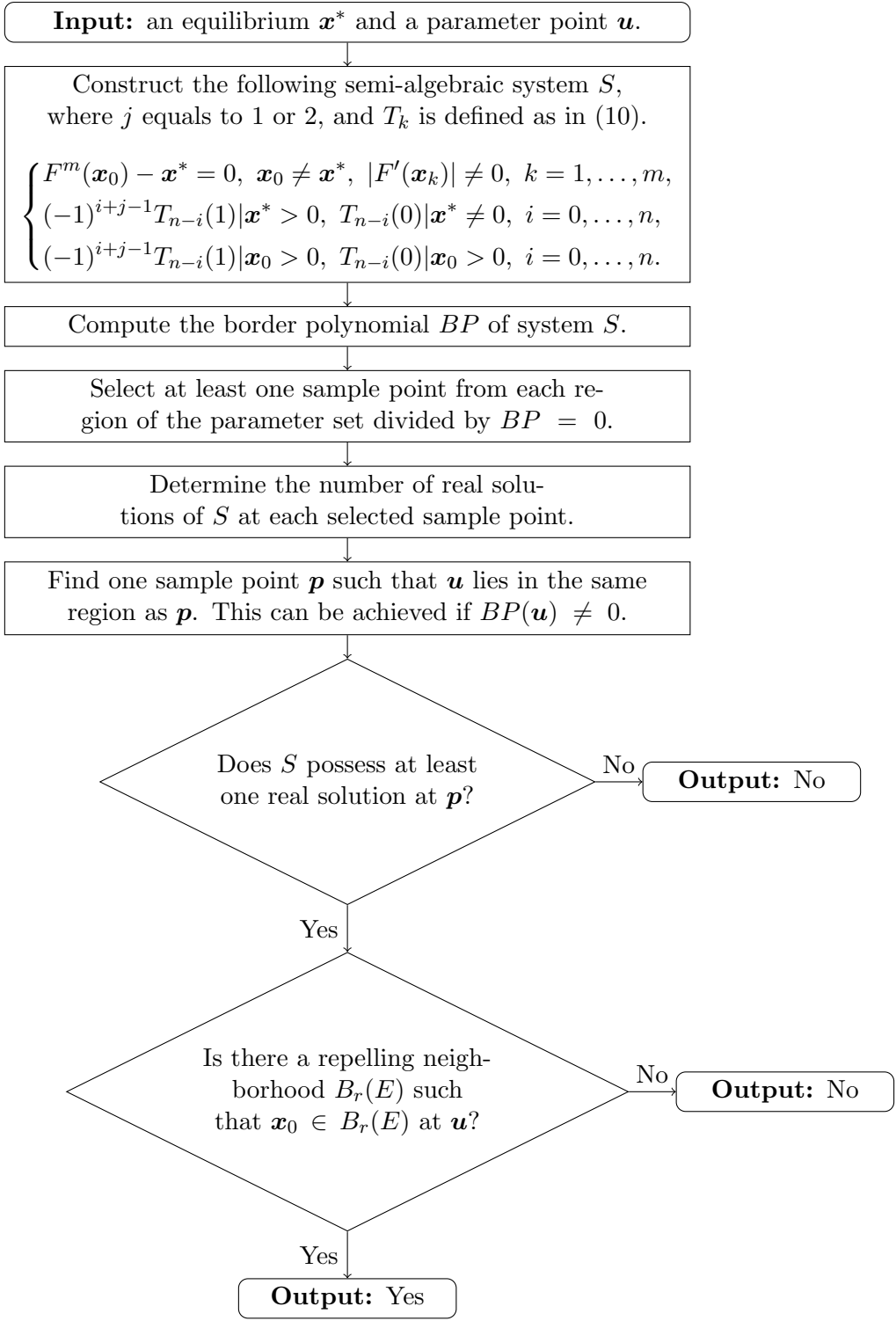

\subsection{Phase portraits}

In Figure \ref{fig:phase} (a) and (b), we depict the phase portraits for $(\rho,\mu)=(175/256,2)$ and $(\rho,\mu)=(5/32,59)$, respectively. The number of iterations is set as 2000, and the initial state is chosen to be $(x(0),y(0))=(0.4,0.01)$. From Figure \ref{fig:phase} (a), it is observed that the trajectory for $(\rho,\mu)=(175/256,2)$ converges to $(0.5,0.5)$, i.e., the positive equilibrium $E_1=(1-1/\mu,1-1/\mu)$. In Figure \ref{fig:phase} (b), we plot the following trajectory for $(\rho,\mu)=(5/32,59)$. 
\begin{align*}
&(0.4,0.01)\rightarrow 
(0.428765625,2.22093750)\rightarrow 
(-24.6360301,4.13182448)\rightarrow 
\\&(-1.40078649\cdot 10^2,-5.81879978\cdot 10^3)\rightarrow 
(-3.12186170\cdot 10^8,-1.87091533\cdot 10^5)\rightarrow 
\\&(-3.22951267\cdot 10^{11},-8.98461267\cdot 10^{17})\rightarrow
(-7.44167598\cdot 10^{36},-9.61493528\cdot 10^{23})\rightarrow 
\\&(-8.52245601\cdot 10^{48},-5.10520928\cdot 10^{74})\rightarrow 
(-2.40269773\cdot 10^{150},-6.69578614\cdot 10^{98})\rightarrow 
\\&(-4.13309308e\cdot 10^{198},-5.32194417\cdot 10^{301})\rightarrow\cdots	
\end{align*}
This trajectory seems to diverge. However, in Theorem \ref{thm:snapback}, we have proved the existence of snapback repellers, which implies the occurrence of chaos. Hence, in some cases, chaotic dynamics may be hard to observe through numerical simulations due to the limitations of the storage and expression of float-point numbers in existing systems of numerical computations. 

\begin{figure}[htbp]
  \centering
  \subfloat[$(\rho,\mu)=(175/256,2)$]{\includegraphics[width=0.45\textwidth]{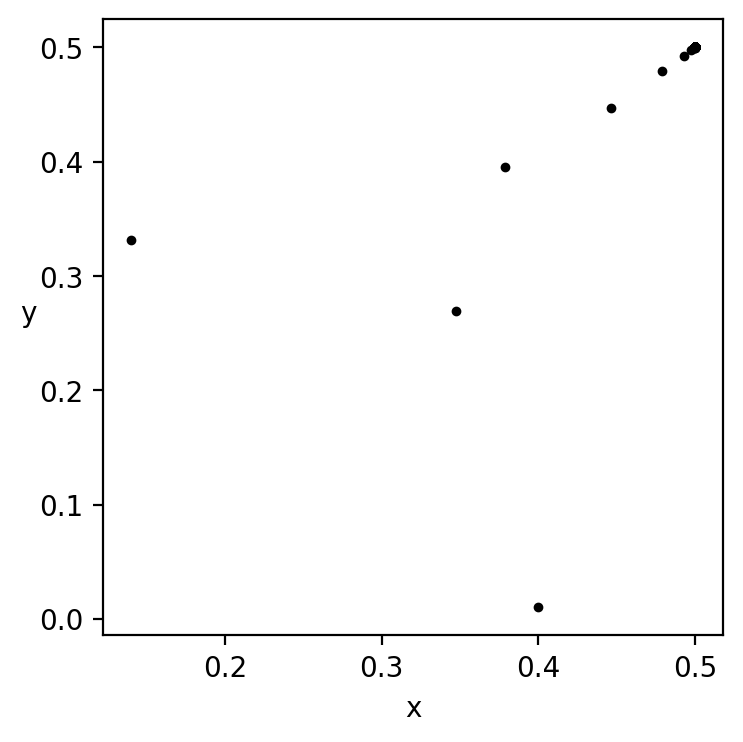}} 
  \subfloat[$(\rho,\mu)=(5/32,59)$]{\includegraphics[width=0.45\textwidth]{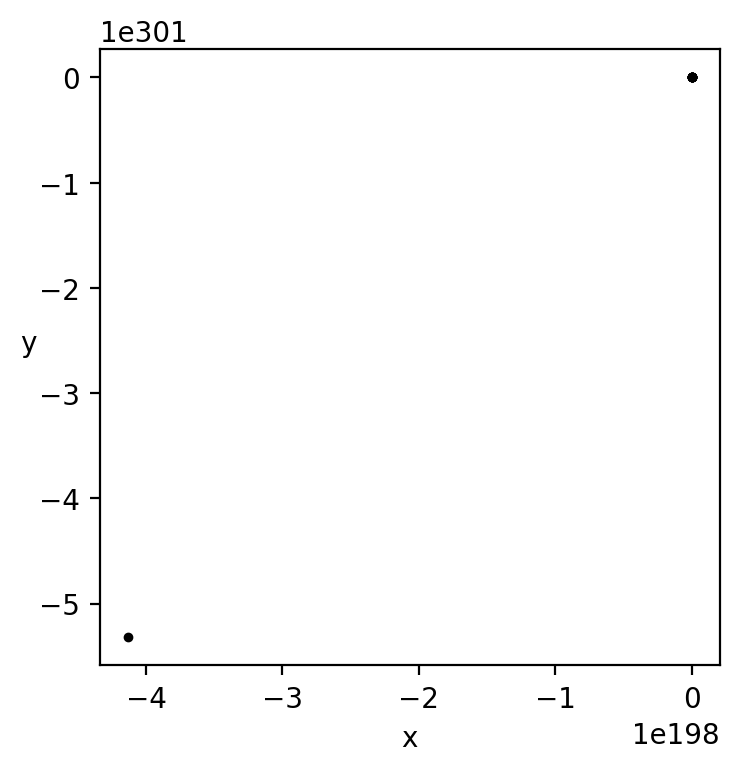}} \\
  \caption{Phase portraits with 2000 iterations, where the initial states are chosen to be $(x,y)=(0.4,0.01)$.}
\label{fig:phase}
\end{figure}



In Figure \ref{fig:complex-dyn} (a)--(c) and (g)--(i), we depict the phase portraits of six typical strange attractors discovered through numerical simulations, the first three of which are relatively simple, while the last three are much more complicated. According to our calculations, $E_0$ is not a 2-snapback repeller for all the fix cases. In Figure \ref{fig:complex-dyn} (d)--(f), we plot the positions of $\xvar_0$ corresponding to $E_0$ for the first three strange attractors (a)--(c), respectively. The computational results show that, for the first three cases, there are no non-zero 2-snapback repellers either. 

%

For the last three phase portraits (g)--(i), we can find non-zero equilibria ($E_1=(1-1/\mu,1-1/\mu)$) with their corresponding $\xvar_0$ in the same connected region defined by the repelling conditions, i.e., $C_i>0$, $i=2,1,0$. Figure \ref{fig:complex-dyn} (j)--(l) depict the positions of these non-zero equilibria (red points) and the corresponding $\xvar_0$ (blue points). It should be noted that in (j)--(l) one can not find a closed ball $B_r(E_1)$ that is covered by the yellow regions and contains any of the blue points. Actually, it is pointed out by Gardini et al.\ \cite{gardini_critical_2011} that the closed ball $B_r(\xvar^*)$ centered at the equilibrium $\xvar^*$ is not necessary and may be replaced by a repelling neighborhood $U$ containing $\xvar^*$. Interested readers can see \cite[Definition 2]{gardini_critical_2011} for more information. Accordingly, for Figure \ref{fig:complex-dyn} (j)--(l), one can prove the existence of snapback repellers according to \cite[Definition 2]{gardini_critical_2011}. This means chaotic dynamics indeed exist, which are confirmed by the last three phase portraits (g)--(i).


%

\begin{figure}[htbp]
  \centering

  \subfloat[$(\rho,\mu)=(63/128, 61/16)$]{\includegraphics[height=0.3\textwidth]{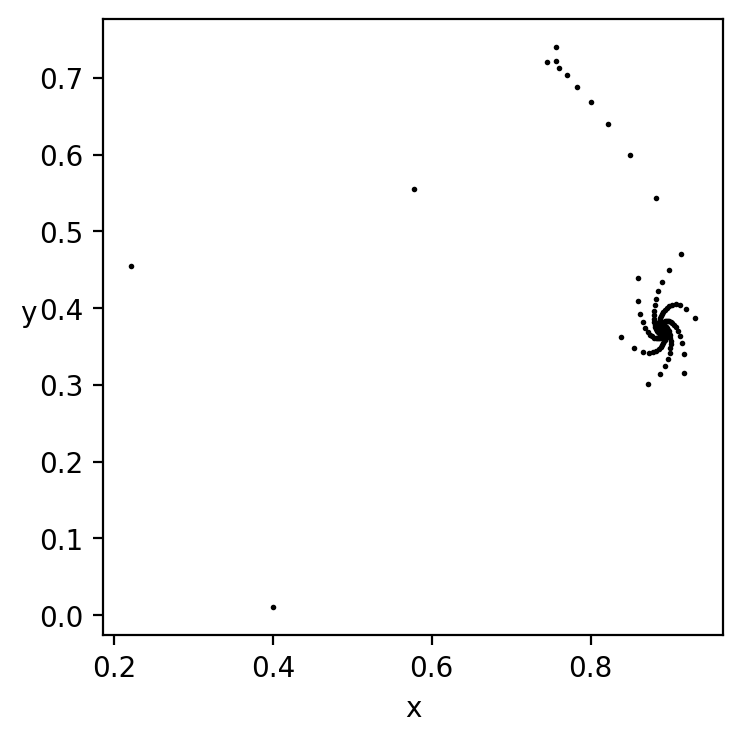}} 
  \subfloat[$(\rho,\mu)=(3/8, 5)$]{\includegraphics[height=0.3\textwidth]{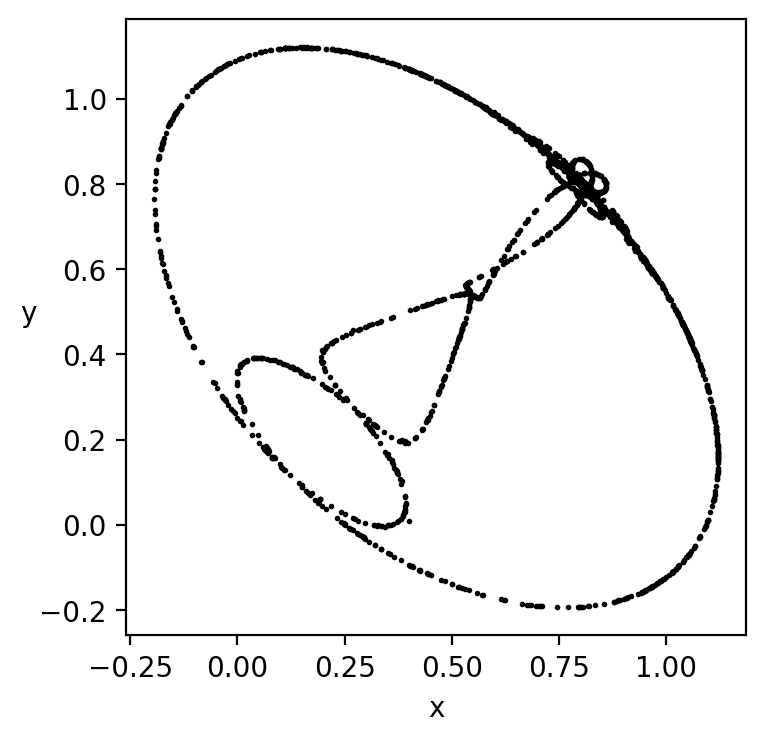}}
  \subfloat[$(\rho,\mu)=(69/128, b = 65/16)$]{\includegraphics[height=0.3\textwidth]{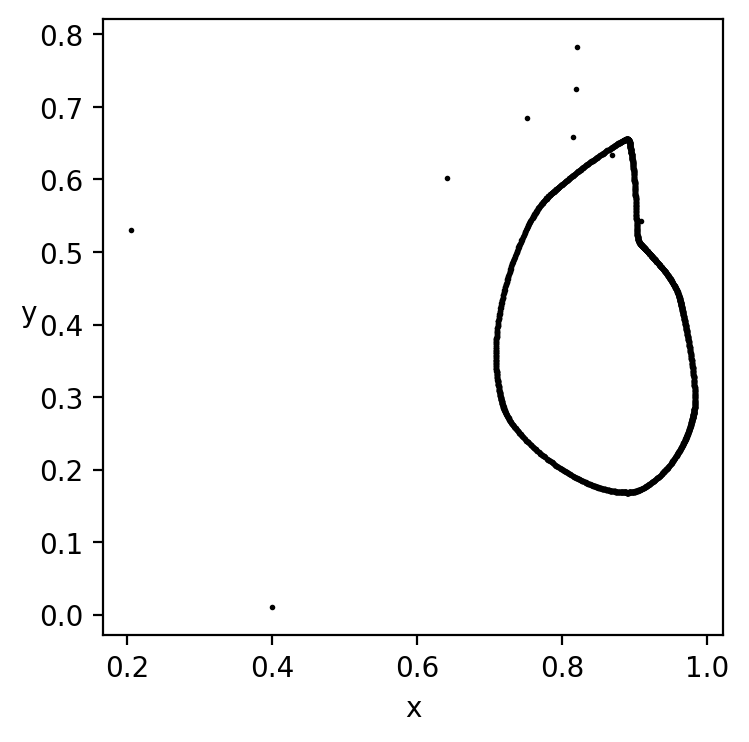}} \\ 

  \subfloat[$(\rho,\mu)=(63/128, 61/16)$]{\includegraphics[height=0.3\textwidth]{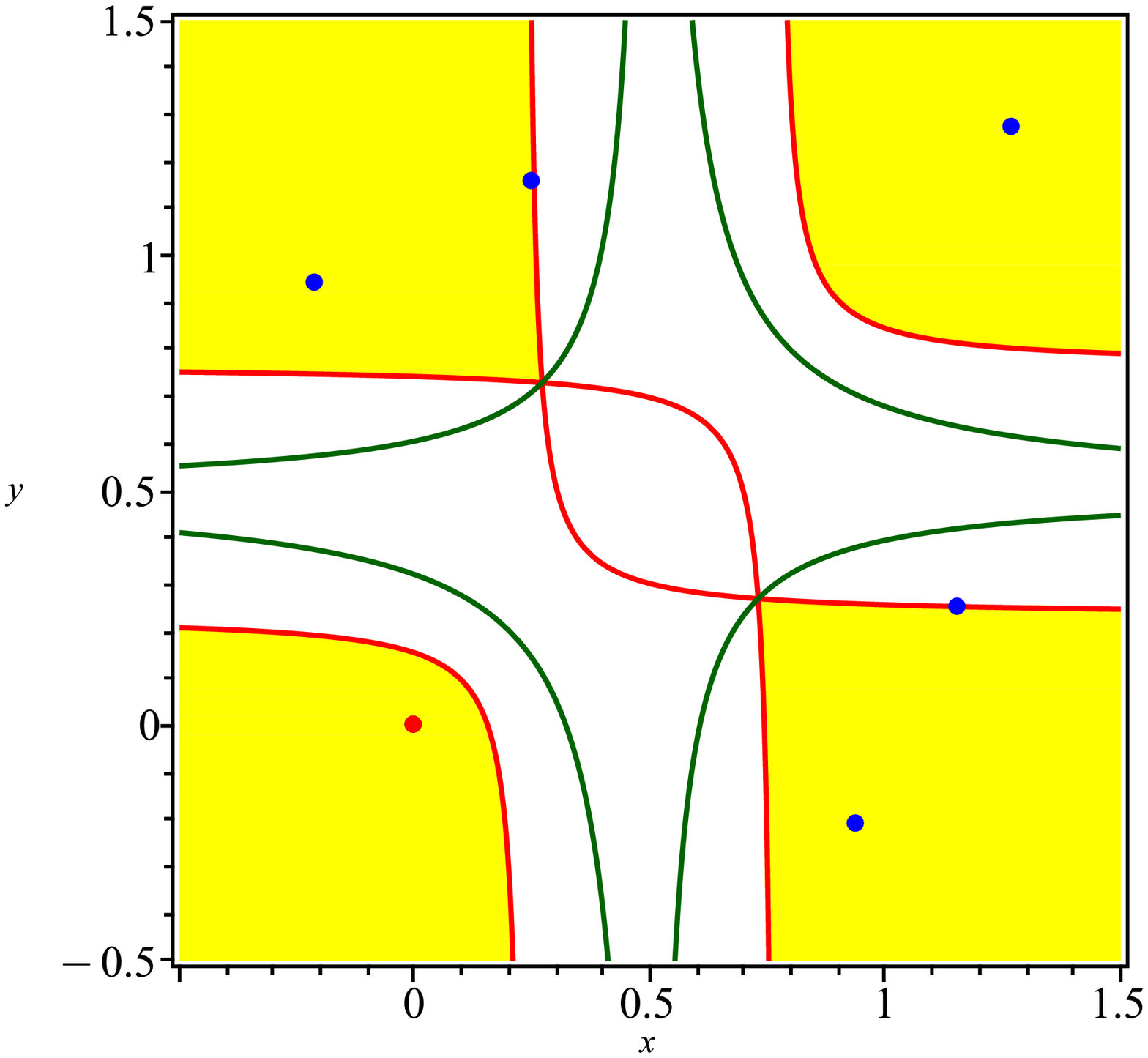}} 
  \subfloat[$(\rho,\mu)=(3/8, 5)$]{\includegraphics[height=0.3\textwidth]{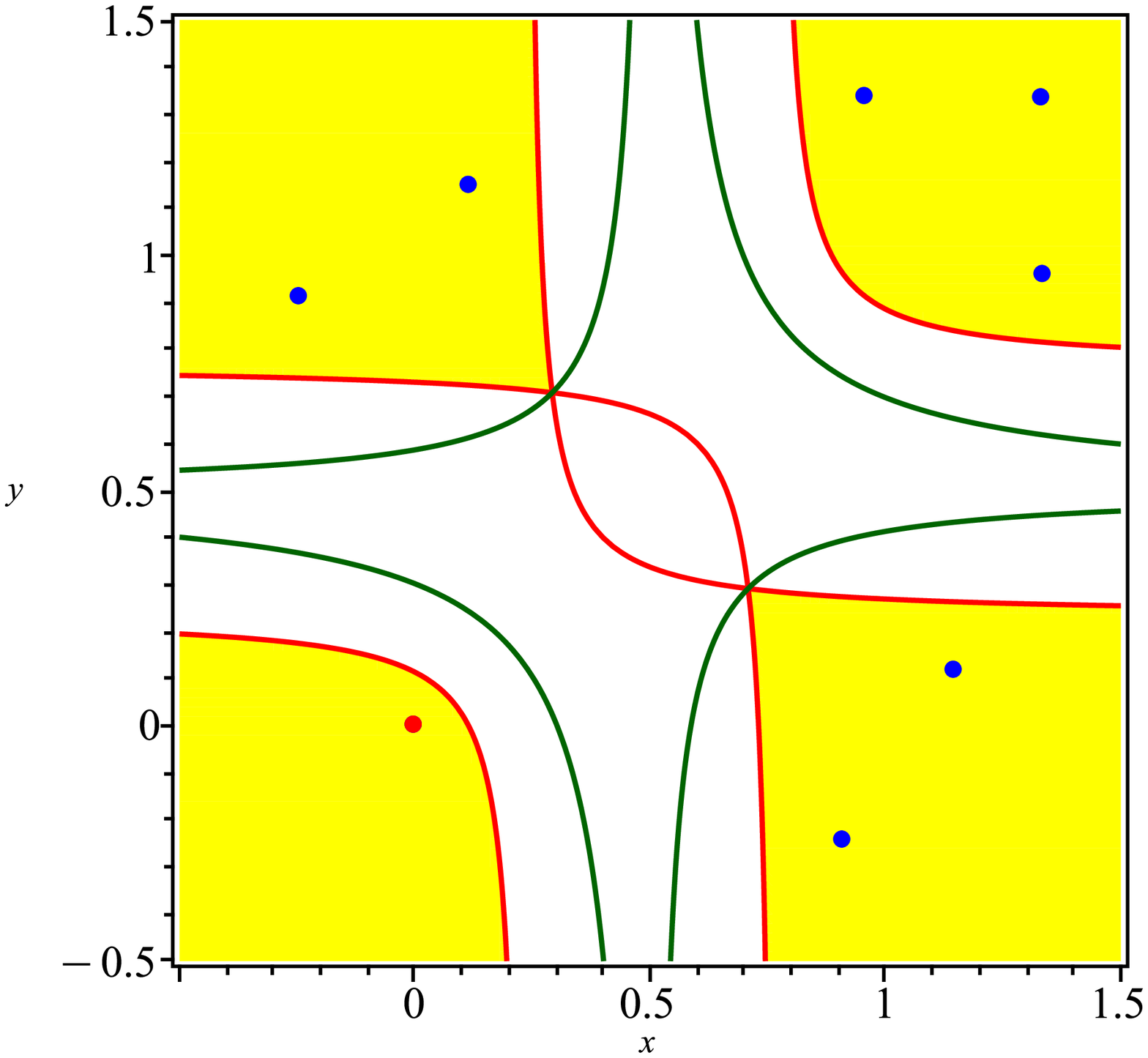}} 
  \subfloat[$(\rho,\mu)=(69/128, b = 65/16)$]{\includegraphics[width=0.3\textwidth]{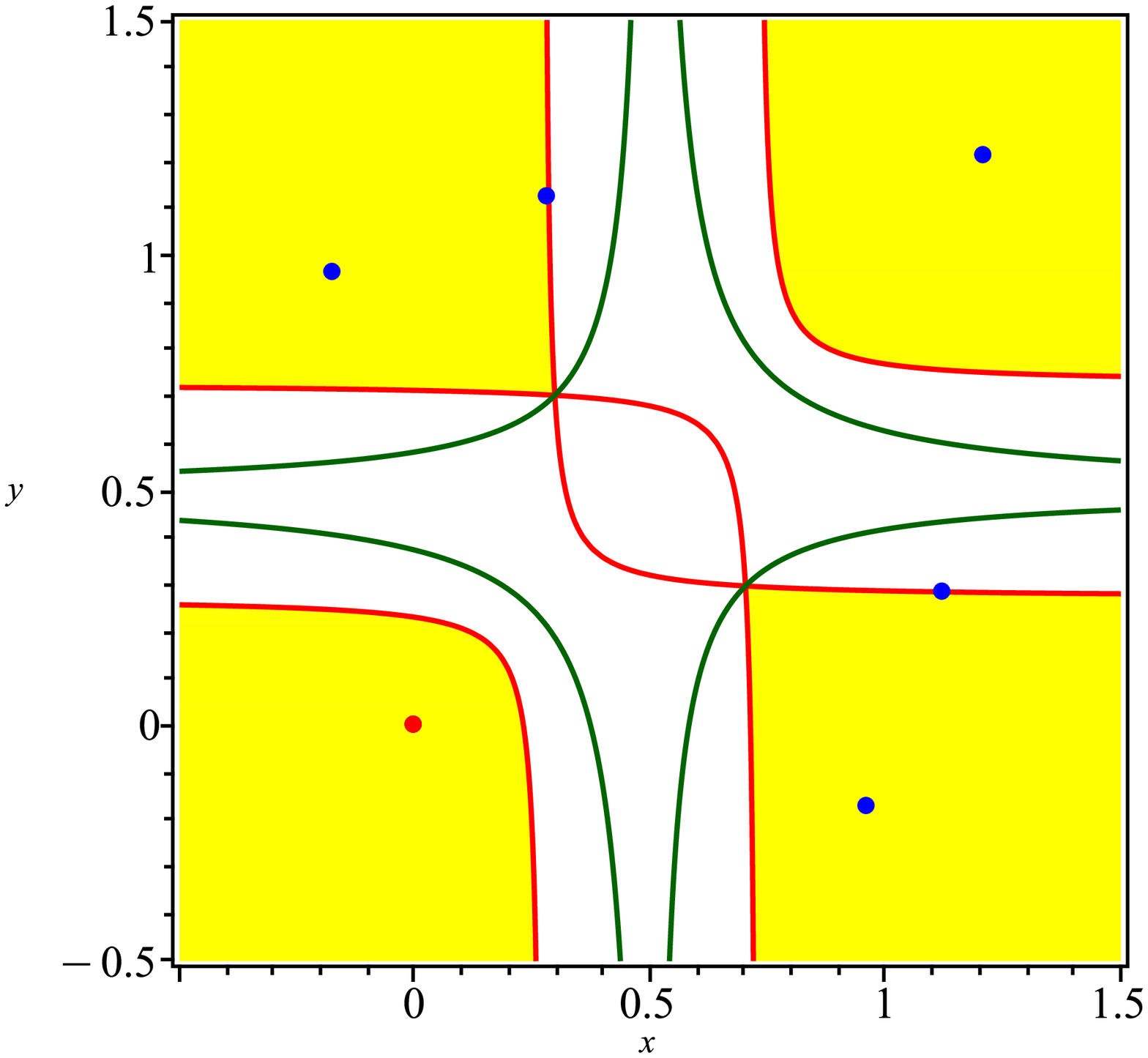}} \\

  \subfloat[$(\rho,\mu)=(15/16, 4)$]{\includegraphics[height=0.3\textwidth]{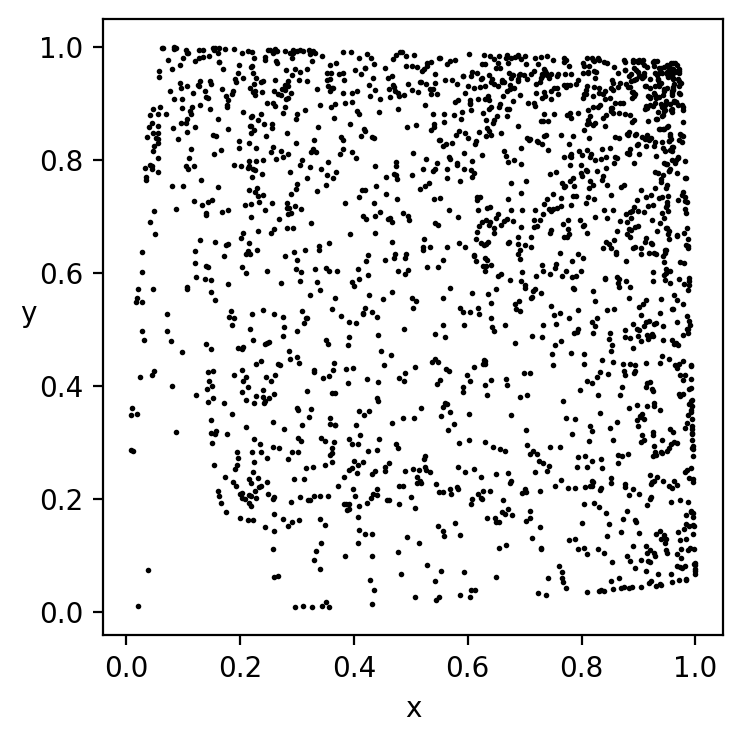}} 
  \subfloat[$(\rho,\mu)=(5/8, 17/4)$]{\includegraphics[height=0.3\textwidth]{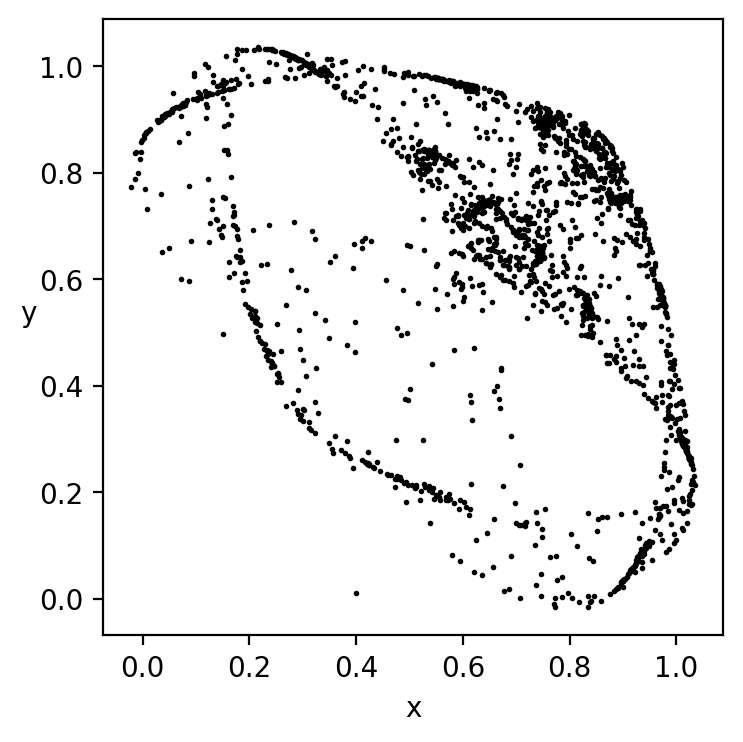}}
  \subfloat[$(\rho,\mu)=(389/512, 4413/1024)$]{\includegraphics[height=0.3\textwidth]{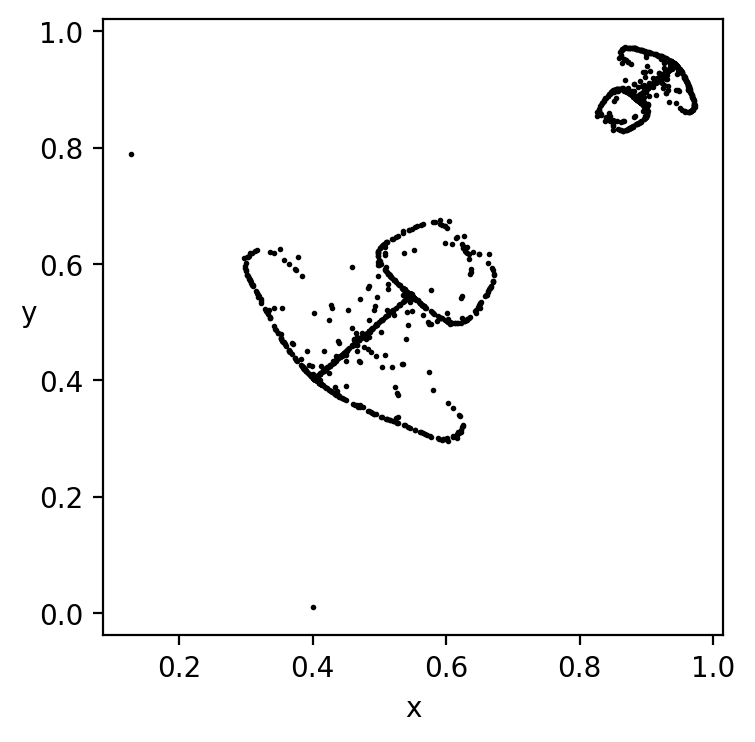}} \\

  \subfloat[$(\rho,\mu)=(15/16, 4)$]{\includegraphics[height=0.3\textwidth]{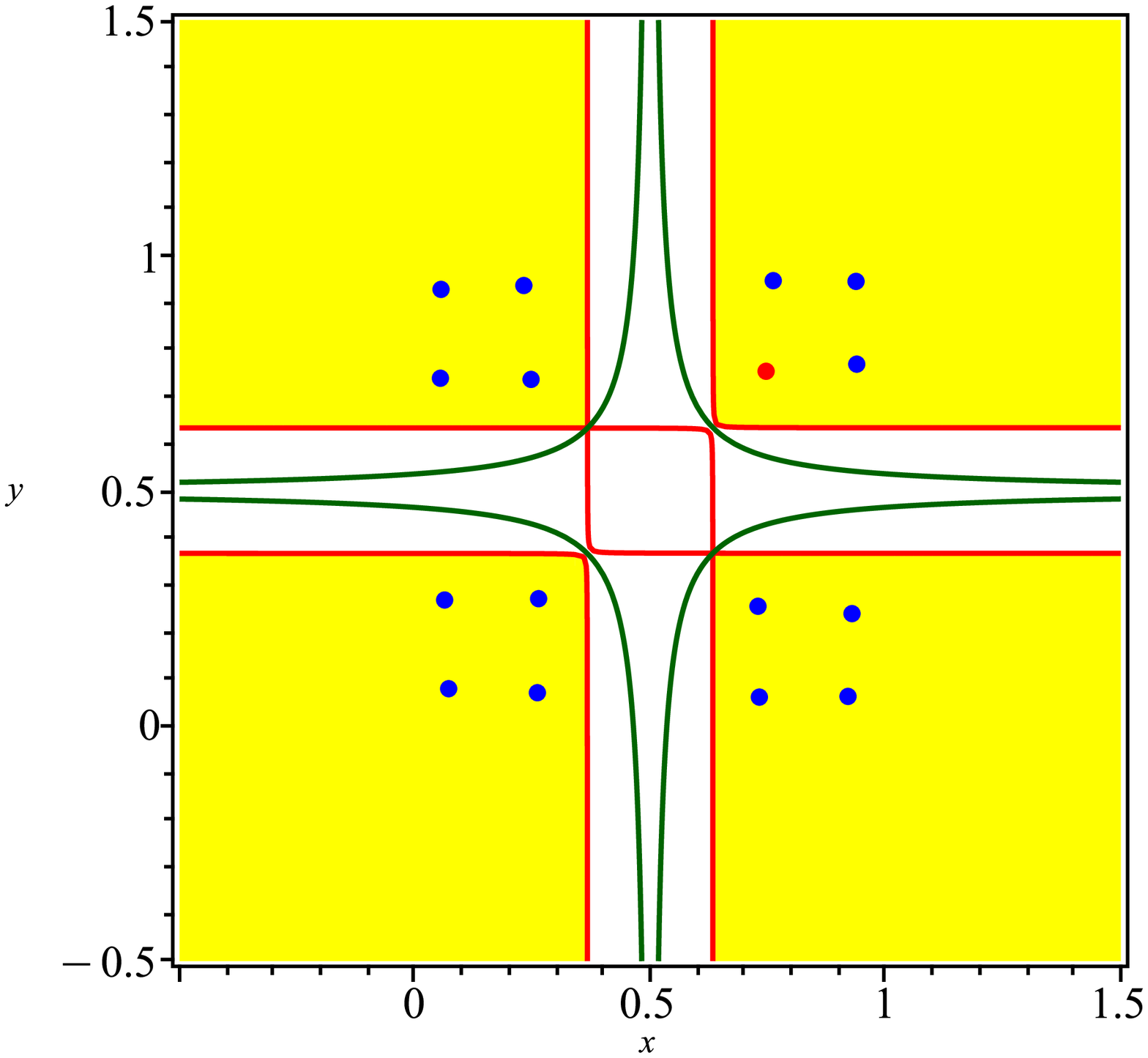}} 
  \subfloat[$(\rho,\mu)=(5/8, 17/4)$]{\includegraphics[width=0.3\textwidth]{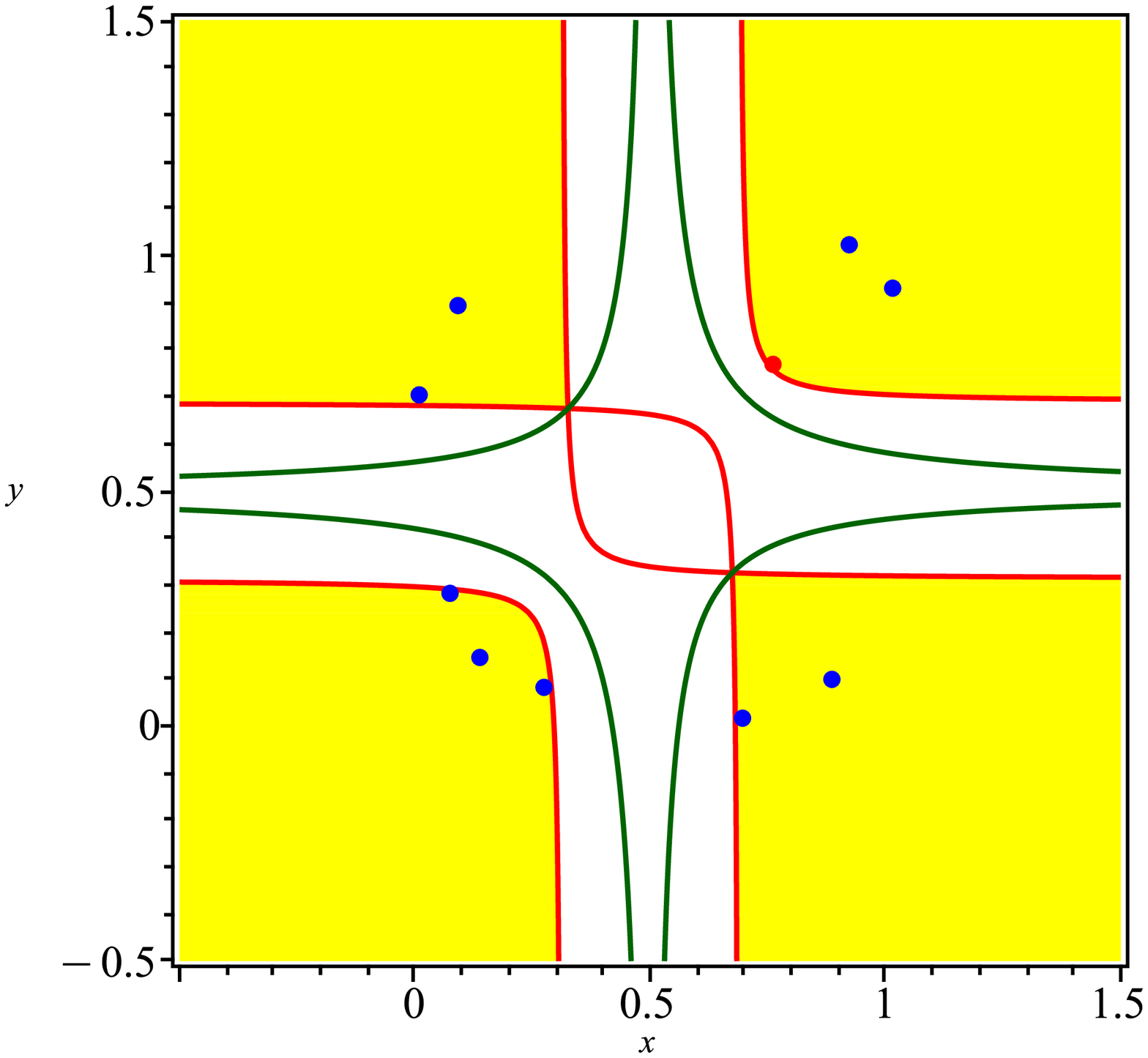}} 
  \subfloat[$(\rho,\mu)=(389/512, 4413/1024)$]{\includegraphics[height=0.3\textwidth]{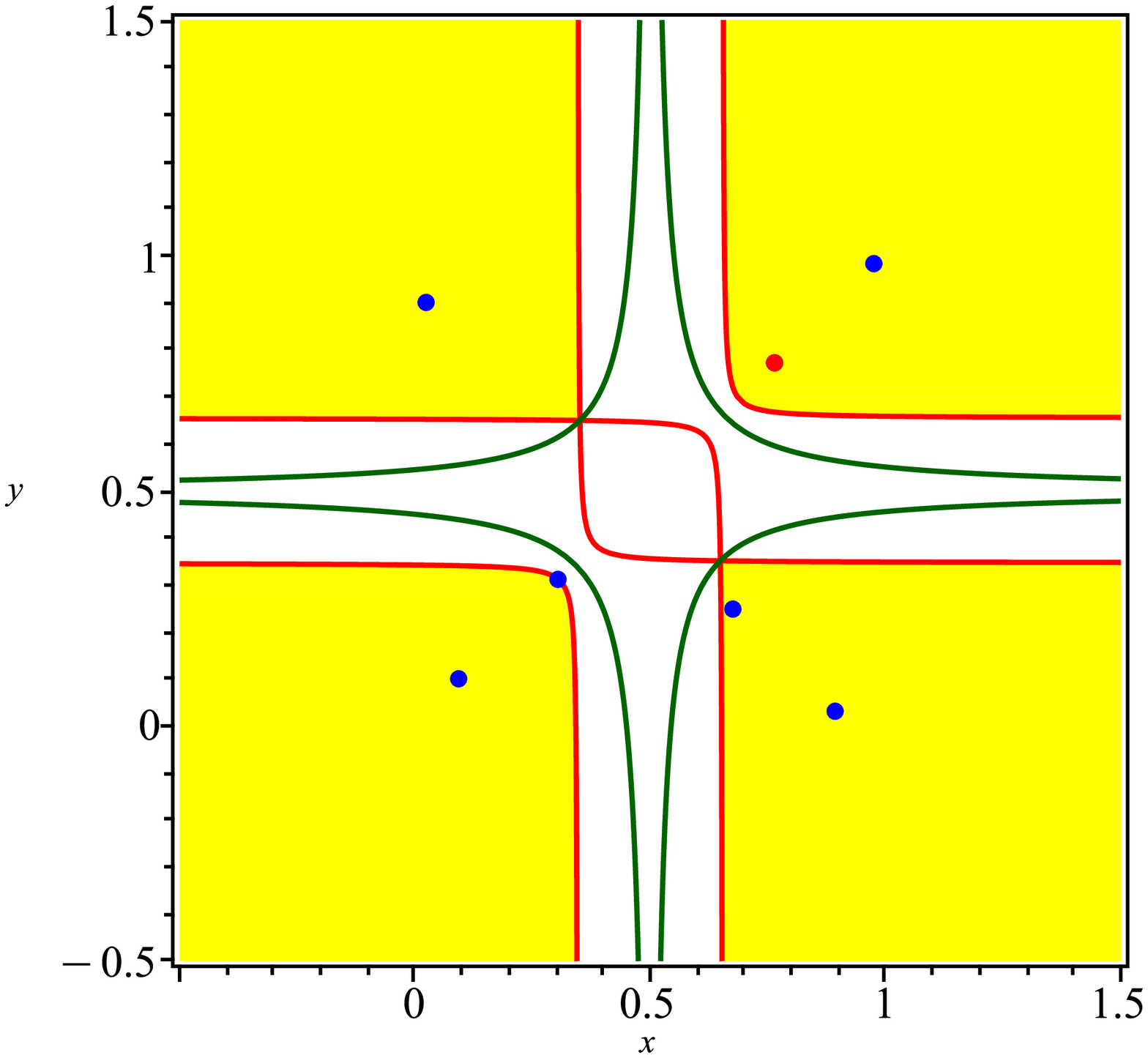}} 

  \caption{Phase portraits of strange attractors and positions of the equilibrium and the corresponding $x_0$. The considered fixed points are marked in red, while the corresponding points of $\xvar_0$ are marked in blue. The region of repellers is colored in yellow.}
\label{fig:complex-dyn}
\end{figure}

\section{Concluding Remarks}

This study filled the gap that there are almost no analytical investigations on the equilibria and their stability in the asymmetric duopoly model of Kopel. We derived the possible positions of the equilibria in Kopel's model. Specifically, all the equilibria should lie in $[0,1]\times [0,1]$ provided that $\mu_1\mu_2\geq 1$ (see Proposition \ref{prop:all-equi}). We explored the possibility of the existence of multiple positive equilibria and established necessary and sufficient conditions for a given number of equilibria to exist (see Theorem \ref{thm:equi-num}). Furthermore, if the two duopolists adopt the best response reactions ($\rho_1=\rho_2=1$) or homogeneous adaptive expectations ($\rho_1=\rho_2$), we established rigorous conditions for distinct numbers of positive equilibria to exist for the first time (see Theorems \ref{thm:stable-equiv} and \ref{thm:ab-one}). In the general case of $\rho_1\neq \rho_2$, however, we failed to obtain the complete conditions that a given number of stable equilibria exist. We explained the essential difficulty is that the algebraic description of a region bounded by algebraic varieties is expensive to compute.

In the current literature, the existence of chaos in Kopel's model seems to be supported by observations through numerical simulations, which, however, is challenging to prove. In our study, we rigorously proved the existence of snapback repellers in Kopel's map, which implies the existence of chaos in the sense of Li--Yorke according to Marotto’s theorem (see Theorem \ref{thm:snapback}). Furthermore, we noticed that the algorithm of \cite{huang_analysis_2019} for finding snapback repellers is insufficient, and we fixed this flaw by introducing more steps of further validation. The flowchart of the entire approach for determining whether a given equilibrium is a snapback repeller at a parameter point is reported in Figure \ref{fig:flowchart}.

Our investigation employed several tools based on symbolic computations such as the triangular decomposition and resultant methods. The results produced by symbolic computations are exact and rigorous, and thus can be used to discover and prove mathematical theorems analytically.

\section*{Acknowledgements}


This work has been supported by Philosophy and Social Science Foundation of Guangdong (No.\ GD21CLJ01), National Natural Science Foundation of China (No.\ 12101032 and No.\ 12131004), Beijing Natural Science Foundation (No.\ 1212005), Social Development Science and Technology Project of Dongguan (No.\ 20211800900692).

\section*{Data Availability}

The authors declare that the manuscript has no associated data.

\section*{Conflict of Interest}

The authors declare that they have no conflict of interest.

\bibliographystyle{abbrv}

\begin{thebibliography}{10}

\bibitem{agiza_analysis_1999}
H.~N. Agiza.
\newblock On the analysis of stability, bifurcation, chaos and chaos control of
  {Kopel} map.
\newblock {\em Chaos, Solitons \& Fractals}, 10(11):1909--1916, 1999.

\bibitem{agiza_multistability_1999}
H.~N. Agiza, G.~I. Bischi, and M.~Kopel.
\newblock Multistability in a dynamic {Cournot} game with three oligopolists.
\newblock {\em Mathematics and Computers in Simulation}, 51(1-2):63--90, 1999.

\bibitem{anderson_basins_2005}
D.~R. Anderson, N.~G. Myran, and D.~L. White.
\newblock Basins of attraction in a {Cournot} duopoly model of {Kopel}.
\newblock {\em Journal of Difference Equations and Applications},
  11(10):879--887, 2005.

\bibitem{bischi_equilibrium_2001}
G.~I. Bischi and M.~Kopel.
\newblock Equilibrium selection in a nonlinear duopoly game with adaptive
  expectations.
\newblock {\em Journal of Economic Behavior \& Organization}, 46(1):73--100,
  2001.

\bibitem{bischi_multistability_2000}
G.~I. Bischi, C.~Mammana, and L.~Gardini.
\newblock Multistability and cyclic attractors in duopoly games.
\newblock {\em Chaos, Solitons \& Fractals}, 11(4):543--564, 2000.

\bibitem{bistritz_zero_1984}
Y.~Bistritz.
\newblock Zero location with respect to the unit circle of discrete-time linear
  system polynomials.
\newblock {\em Proceedings of the IEEE}, 72(9):1131--1142, 1984.

\bibitem{Canovas2018O}
J.~S. C{\'a}novas and M.~Mu{\~n}oz-Guillermo.
\newblock On the dynamics of {Kopel’s} {Cournot} duopoly model.
\newblock {\em Applied Mathematics and Computation}, 330:292--306, 2018.

\bibitem{cavalli_nonlinear_2015}
F.~Cavalli and A.~Naimzada.
\newblock Nonlinear dynamics and convergence speed of heterogeneous {Cournot}
  duopolies involving best response mechanisms with different degrees of
  rationality.
\newblock {\em Nonlinear Dynamics}, 81(1):967--979, 2015.

\bibitem{Collins1991P}
G.~E. Collins and H.~Hong.
\newblock Partial cylindrical algebraic decomposition for quantifier
  elimination.
\newblock {\em Journal of Symbolic Computation}, 12(3):299--328, 1991.

\bibitem{Collins1983R}
G.~E. Collins and R.~Loos.
\newblock Real zeros of polynomials.
\newblock In B.~Buchberger, G.~Collins, and R.~Loos, editors, {\em Computer
  Algebra: Symbolic and Algebraic Computation}, pages 83--94. Springer, New
  York, 1983.

\bibitem{elsadany_dynamical_2016}
A.~A. Elsadany and A.~M. Awad.
\newblock Dynamical analysis and chaos control in a heterogeneous {Kopel}
  duopoly game.
\newblock {\em Indian Journal of Pure and Applied Mathematics}, 47(4):617--639,
  2016.

\bibitem{eskandari_codimension_2021}
Z.~Eskandari, J.~Alidousti, and R.~K. Ghaziani.
\newblock Codimension-one and-two bifurcations of a three-dimensional discrete
  game model.
\newblock {\em International Journal of Bifurcation and Chaos},
  31(02):2150023--1--16, 2021.

\bibitem{gardini_critical_2011}
L.~Gardini, I.~Sushko, V.~Avrutin, and M.~Schanz.
\newblock Critical homoclinic orbits lead to snap-back repellers.
\newblock {\em Chaos, Solitons \& Fractals}, 44(6):433--449, 2011.

\bibitem{Govaerts2008}
W.~Govaerts and R.~K. Ghaziani.
\newblock Stable cycles in a {Cournot} duopoly model of {Kopel}.
\newblock {\em Journal of Computational and Applied Mathematics},
  218(2):247--258, 2008.

\bibitem{huang_analysis_2019}
B.~Huang and W.~Niu.
\newblock Analysis of snapback repellers using methods of symbolic computation.
\newblock {\em International Journal of Bifurcation and Chaos},
  29(04):1950054--1--13, 2019.

\bibitem{jin_new_2013}
M.~Jin, X.~Li, and D.~Wang.
\newblock A new algorithmic scheme for computing characteristic sets.
\newblock {\em Journal of Symbolic Computation}, 50:431--449, 2013.

\bibitem{jing_bifurcation_2004}
Z.~Jing, Y.~Chang, and B.~Guo.
\newblock Bifurcation and chaos in discrete {FitzHugh}–{Nagumo} system.
\newblock {\em Chaos, Solitons \& Fractals}, 21(3):701--720, 2004.

\bibitem{Jury1976I}
E.~Jury, L.~Stark, and V.~Krishnan.
\newblock Inners and stability of dynamic systems.
\newblock {\em IEEE Transactions on Systems, Man, and Cybernetics},
  (10):724--725, 1976.

\bibitem{kopel_simple_1996}
M.~Kopel.
\newblock Simple and complex adjustment dynamics in {Cournot} duopoly models.
\newblock {\em Chaos, Solitons \& Fractals}, 7(12):2031--2048, 1996.

\bibitem{Li2022C}
B.~Li, H.~Liang, L.~Shi, and Q.~He.
\newblock Complex dynamics of {Kopel} model with nonsymmetric response between
  oligopolists.
\newblock {\em Chaos, Solitons \& Fractals}, 156:111860--1--14, 2022.

\bibitem{li_bifurcation_2023}
B.~Li, Y.~Zhang, X.~Li, Z.~Eskandari, and Q.~He.
\newblock Bifurcation analysis and complex dynamics of a {Kopel} triopoly
  model.
\newblock {\em Journal of Computational and Applied Mathematics},
  426:115089--1--31, 2023.

\bibitem{li_improved_2003}
C.~Li and G.~Chen.
\newblock An improved version of the {Marotto} {Theorem}.
\newblock {\em Chaos, Solitons \& Fractals}, 18(1):69--77, 2003.

\bibitem{li_period_1975}
T.-Y. Li and J.~A. Yorke.
\newblock Period three implies chaos.
\newblock {\em The American Mathematical Monthly}, 82(10):985--992, 1975.

\bibitem{li_stability_2023}
X.~Li, B.~Li, and L.~Liu.
\newblock Stability and dynamic behaviors of a limited monopoly with a gradient
  adjustment mechanism.
\newblock {\em Chaos, Solitons \& Fractals}, 168:113106--1--18, 2023.

\bibitem{Li2010D}
X.~Li, C.~Mou, and D.~Wang.
\newblock Decomposing polynomial sets into simple sets over finite fields: the
  zero-dimensional case.
\newblock {\em Computers \& Mathematics with Applications}, 60(11):2983--2997,
  2010.

\bibitem{Li2014C}
X.~Li and D.~Wang.
\newblock Computing equilibria of semi-algebraic economies using triangular
  decomposition and real solution classification.
\newblock {\em Journal of Mathematical Economics}, 54:48--58, 2014.

\bibitem{liao_snapback_2012}
K.-L. Liao and C.-W. Shih.
\newblock Snapback repellers and homoclinic orbits for multi-dimensional maps.
\newblock {\em Journal of Mathematical Analysis and Applications},
  386(1):387--400, 2012.

\bibitem{marotto_snap-back_1978}
F.~R. Marotto.
\newblock Snap-back repellers imply chaos in ${R}_n$.
\newblock {\em Journal of Mathematical Analysis and Applications},
  63(1):199--223, 1978.

\bibitem{marotto_redefining_2005}
F.~R. Marotto.
\newblock On redefining a snap-back repeller.
\newblock {\em Chaos, Solitons \& Fractals}, 25(1):25--28, 2005.

\bibitem{ming_analysis_2021}
H.~Ming, H.~Hu, and J.~Zheng.
\newblock Analysis of a new coupled hyperchaotic model and its topological
  types.
\newblock {\em Nonlinear Dynamics}, 105(2):1937--1952, 2021.

\bibitem{Mishra1993A}
B.~Mishra.
\newblock {\em Algorithmic Algebra}.
\newblock Springer-Verlag, New York, 1993.

\bibitem{peng_numerical_2007}
C.-C. Peng.
\newblock Numerical computation of orbits and rigorous verification of
  existence of snapback repellers.
\newblock {\em Chaos: An Interdisciplinary Journal of Nonlinear Science},
  17(1):013107--1--8, 2007.

\bibitem{Puu1991C}
T.~Puu.
\newblock Chaos in duopoly pricing.
\newblock {\em Chaos, Solitons \& Fractals}, 1(6):573--581, 1991.

\bibitem{ren_bifurcations_2017}
J.~Ren, L.~Yu, and S.~Siegmund.
\newblock Bifurcations and chaos in a discrete predator–prey model with
  {Crowley}–{Martin} functional response.
\newblock {\em Nonlinear Dynamics}, 90(1):19--41, 2017.

\bibitem{rionero_stability_2014}
S.~Rionero and I.~Torcicollo.
\newblock Stability of a continuous reaction-diffusion {Cournot}-{Kopel}
  duopoly game model.
\newblock {\em Acta Applicandae Mathematicae}, 132(1):505--513, 2014.

\bibitem{torcicollo_dynamics_2013}
I.~Torcicollo.
\newblock On the dynamics of a non-linear duopoly game model.
\newblock {\em International Journal of Non-Linear Mechanics}, 57:31--38, 2013.

\bibitem{tramontana_local_2015}
F.~Tramontana, A.~A. Elsadany, B.~Xin, and H.~N. Agiza.
\newblock Local stability of the {Cournot} solution with increasing
  heterogeneous competitors.
\newblock {\em Nonlinear Analysis: Real World Applications}, 26:150--160, 2015.

\bibitem{wang_elimination_2001}
D.~Wang.
\newblock {\em Elimination {Methods}}.
\newblock Texts and {Monographs} in {Symbolic} {Computation}. Springer, New
  York, 2001.

\bibitem{wang_bifurcation_2010}
J.~Wang and G.~Feng.
\newblock Bifurcation and chaos in discrete-time {BVP} oscillator.
\newblock {\em International Journal of Non-Linear Mechanics}, 45(6):608--620,
  2010.

\bibitem{wu_complex_2010}
W.~Wu, Z.~Chen, and W.~H. Ip.
\newblock Complex nonlinear dynamics and controlling chaos in a {Cournot}
  duopoly economic model.
\newblock {\em Nonlinear Analysis: Real World Applications}, 11(5):4363--4377,
  2010.

\bibitem{Yang2001A}
L.~Yang, X.~Hou, and B.~Xia.
\newblock A complete algorithm for automated discovering of a class of
  inequality-type theorems.
\newblock {\em Science in China Series F}, 44:33--49, 2001.

\bibitem{yao_codimension-one_2022}
X.~Yao, X.~Li, J.~Jiang, and A.~Y.~T. Leung.
\newblock Codimension-one and -two bifurcation analysis of a two-dimensional
  coupled logistic map.
\newblock {\em Chaos, Solitons \& Fractals}, 164:112651--1--9, 2022.

\bibitem{zhang_equilibrium_2019}
Y.~Zhang and X.~Gao.
\newblock Equilibrium selection of a homogenous duopoly with extrapolative
  foresight.
\newblock {\em Communications in Nonlinear Science and Numerical Simulation},
  67:366--374, 2019.

\bibitem{zhao_bifurcation_2021}
M.~Zhao.
\newblock Bifurcation and chaotic behavior in the discrete {BVP} oscillator.
\newblock {\em International Journal of Non-Linear Mechanics},
  131:103687--1--11, 2021.

\end{thebibliography}

%
%


%
%
%
%
%
%
%
%
%
%
%
%
%
%
%
%

\end{CJK}
\end{document}